\documentclass[]{interact}

\usepackage{epstopdf}
\usepackage{subfigure}

\usepackage{natbib}
\bibpunct[, ]{(}{)}{;}{a}{}{,}
\renewcommand\bibfont{\fontsize{10}{12}\selectfont}

\theoremstyle{plain}
\newtheorem{theorem}{Theorem}[section]

\newtheorem{proposition}{Proposition}[section]

\theoremstyle{definition}
\newtheorem{definition}{Definition}[section]
\newtheorem{example}{Example}[section]
\theoremstyle{remark}
\newtheorem{remark}{Remark}[section]

\begin{document}
	
	
	\title{Metric basis and dimension of barycentric subdivision of zero divisor graphs}
	
	\author{
		\name{S. Vidya\textsuperscript{a}\thanks{CONTACT G. R. Vadiraja Bhatta . Email: vadiraja.bhatta@manipal.edu}, Sunny Kumar Sharma
        \textsuperscript{b}, Prasanna Poojary\textsuperscript{a}, Omaima Alshanqiti\textsuperscript{c}, and G. R. Vadiraja Bhatta\textsuperscript{a} }
		\affil{\textsuperscript{a} Manipal Institute of Technology, Manipal Academy of Higher Education, Manipal, India; \textsuperscript{b}School of Mathematics, Shri Mata Vaishno Devi University, Katra-182320, Jammu and Kashmir, India;
  \textsuperscript{c}Department of Mathematics, Umm al-qura University, Makkah, Saudi Arabia;
  \textsuperscript{d}Department of Mathematics, Manipal Institute of Technology, Manipal Academy of Higher Education, Manipal, India}}
	
	\maketitle
	
	\begin{abstract}
		Let $R$ be a commutative ring with unity 1, and $ G(V,E)$ be a simple, connected, nontrivial graph. Let $d(a,c)$ be the distance between the vertices $a$ and $c $ in $G$. An undirected zero divisor graph of a ring $R$ is denoted by $\Gamma(R) = (V(\Gamma(R)), E(\Gamma(R)))$, where the vertex set $V(\Gamma(R))$ consists of all the non-zero zero-divisors of $R$, and the edge set $E(\Gamma(R))$ is defined as follows: $E(\Gamma(R)) = $ $\{e = a_1a_2$ $ |$ $
a_1 \cdot a_2 = 0$  $\&$ $  a_1, a_2 \in V(\Gamma(R))\}$. In this article, we consider the zero divisor graph of a group of integers modulo \(n\), denoted as \(\Gamma(\mathbb{Z}_n)\), where \(n=pq\). Here, \(p\) and \(q\) are distinct primes, with \(q > p\). We aim to determine the metric dimension of the barycentric subdivision of the zero divisor graph \(\Gamma(\mathbb{Z}_n)\), denoted by \(dim(BS(\Gamma(\mathbb{Z}_n)))\), and we also prove that \(dim(BS(\Gamma(\mathbb{Z}_n)))\geq q-2\) for every \(n=pq\), where \(p\) and \(q\) are distinct primes and $q>p$.
	\end{abstract}
	
	\begin{keywords}
Zero divisor graph, barycentric subdivision, resolving set, metric basis, independent set, metric dimension, independent metric dimension
	\end{keywords}

	\section{Introduction}
	
	\noindent Let $G(V, E)$ be a simple graph with a vertex set $V(G)$ and an edge set $E(G)$. The degree of a vertex $v$ is denoted by $d(v)$ and represents the number of edges connected to vertex $v$. A graph is called a regular graph if all of its vertices have the same degree. A path between two vertices $p_1$ and $p_n \in V(G)$ is an ordered sequence of distinct vertices $p_1, p_2,\ldots,p_n$ of $G$, such that $p_{j-1}p_j$ is an edge for $2\leq j\leq n$. The distance between two vertices $v_1$ and $v_2$, denoted by $d(v_1, v_2)$, is the length of the shortest path from $v_1$ to $v_2$ in $G$. A graph $G$ is connected if there is a path between every pair of vertices in $G$; otherwise, it is disconnected. An independent set $S \subseteq V(G)$ is a set in which the subgraph induced by $S$ is completely disconnected. A bipartite graph is a graph in which the vertex set is divided into two disjoint sets, denoted by \( V_1 \) and \( V_2 \), every edge \( e = ab \) in \( E(G) \) connects a vertex \( a \) from one set to a vertex \( b \) from the other set, meaning either \( a \in V_1 \), and \( b \in V_2 \), or \( a \in V_2 \), and \( b \in V_1 \). A bipartite graph is a complete bipartite graph if every vertex in \( V_1 \) is adjacent to every vertex in \( V_2 \). This complete bipartite graph, with \(|V_1| = m\) and \(|V_2| = n\) vertices, is denoted by \( K_{m,n} \). A maximal connected subgraph of a graph \( G = (V, E) \) is a subgraph \( S = (V_S, E_S) \), where \( V_S \subseteq V \), and \( E_S \subseteq E \), such that no additional vertices or edges from \( G \) can be added to \( S \) without losing its connectivity. A tree \( T = (V, E) \) is a connected, undirected graph with no cycles. For each node \( v \in V \) of a tree \( T \), the legs at \( v \) are the bridges, which are paths. The number of legs at $ v$ is represented by $l_v$. For more basics on graph theory, we refer to books (\cite{w}, \cite{bo}).\\

\noindent Beck  (\cite{beck}) first introduced the idea of a zero-divisor graph $\Gamma(R)$ on a commutative ring $R$. His work created a link between ring theory and graph theory. He primarily focused on finitely colourable rings. Later, a new approach to associate zero-divisors with $R$ was presented by Anderson and Livingston (\cite{anderson}). They defined an undirected zero divisor graph of a ring $R$ to be a graph, denoted by $\Gamma(R) = (V(\Gamma(R)), E(\Gamma(R)))$, where the vertex set $V(\Gamma(R))$ consists of all the non-zero zero-divisors of $R$, and the edge set $E(\Gamma(R))$ is described as follows: $E(\Gamma(R)) = $ $\{e = p_1p_2$ $ |$ $
p_1 \cdot p_2 = 0$  $\&$ $  p_1, p_2 \in V(\Gamma(R))\}$. We adopt Anderson and Livingston's approach, considering only non-zero zero-divisors as vertices in graph $\Gamma(R)$. Redmond (\cite{redmond}) proposed an extended definition of a zero-divisor graph that includes non-commutative rings. He demonstrated that for a non-commutative ring $R$, if the sets of left zero divisors and right zero-divisors are identical, then the graph $\Gamma(R)$ is connected. Further, Demeyer et al. (\cite{demyer}) generalised the concept of zero-divisor graphs from rings to semigroups. Additionally, Lucas (\cite{lucas}) analysed the diameter of $\Gamma(R)$. Behboodi (\cite{behboodi}) presented the concept of zero-divisor graphs for modules over commutative rings. For more information on this concept, one can refer to (\cite{sing,vk, ak}).\\

\noindent The idea of metric dimension (MD) was first introduced independently by Slater (\cite{sl}), and Harary and Melter (\cite{ha}). However, Erdos, Harary, and Tutte had already discussed the dimension of graphs (\cite{er}). In their work, Slater focused on the MD of trees, while Harary and Melter discussed the MD of complete graphs, cycles, and bipartite graphs. Slater referred to resolving set and MD as locating set and locating number respectively, while Harary and Melter named these concepts as resolving set and MD. They termed the ordered subset $A=(a_1,a_2,a_3, \ldots,a_v)$ of vertices of graph $G= (V, E)$ as a resolving set if the representation $\delta(p|A)=(d(p,a_1),d(p,a_2), d(p,a_3),...,d(p,a_v))$ for each vertex $p\in V$ is unique. The minimum resolving set is referred to as a metric basis, the cardinality of the metric basis is called the MD. Instead of locating set and locating number we will consider the terms resolving set and MD throughout this article. For recent work related to MD, one can refer to (\cite{alshe, nazeer, anand, sha, vid}).\\
   
\noindent Pirzada and Raja (\cite{pir}) first investigated the MD of a zero-divisor graph. They have examined the MD of $\Gamma(R \times F_s)$, where $F_s$ is a finite field. Additionally, they computed the MD of $\Gamma(R_1\times R_2\times R_3\times \ldots,\times R_r)$;
 where all $R_i$, $1\leq i\leq r$
  are finite commutative rings. Later, Pirzada et.al. (\cite{pi}) determined the MD and upper dimensions of zero-divisor graphs associated with commutative rings. They also studied several properties of the MD of the zero divisor graph. Later Sharma and Bhat (\cite{ss}) analyzed the fault-tolerant MD of a zero divisor graph and also the line graph of a zero divisor graph associated with a commutative ring. \\
 
 \noindent The MD is an important parameter that can considerably improve graph analysis. It has a wide range of applications, including robot navigating (\cite{kh}), combinatorial optimization (\cite{se}), telecommunication (\cite{be}), coast guard Loran (\cite{sl}), SONAR, network discovery and verification (\cite{be}), pharmacological activity and drug discovery (\cite{chr}), Joins in graphs (\cite{se}), coin weighing problems (\cite{so}), geographical routing protocol (\cite{li}), mastermind games (\cite{chv}), image processing and pattern recognition (\cite{be,ra}).
 There are numerous variations of the notion of MD, which include local MD, strong MD, fractional MD, fault-tolerant MD, edge MD, mixed MD, etc. \\
 
 \noindent  An operation that involves splitting an edge into two edges by inserting a new vertex into the interior of the edge is called subdivision an edge. When this operation is applied to a sequence of edges in a graph $G$, it is referred to as subdividing the graph \( G \). The resulting graph is known as the subdivision of graph \( G \). If all edges in a graph are subdivided, it is called a barycentric subdivision of the graph (\cite{koam}). Gross and Yellen (\cite{jl}) have illustrated that a barycentric subdivision of the graph is both bipartite and loopless. Our research will investigate the MD of the barycentric subdivision of the zero divisor graph, which is denoted by $BS(\Gamma(R))$.\\

  \noindent  In the second section, we will cover the basic definitions and some results of MD and zero divisor graphs. Following that, in the third section, we will explore the MD of the barycentric subdivision of $\Gamma(Z_{pq})$, where $p\geq2$ and $q>p$. Finally, we will conclude that the MD and independent MD of the barycentric subdivision of $\Gamma(Z_{pq})$ is greater than or equal to $q-2$, where $p\geq 2$, and $q>p$. Throughout this article, we consider $p$ and $q$ to be two distinct primes.

 \section{Preliminaries}
This section contains the basic definitions and results of MD and zero divisor graphs, which are useful for proving our main result.
 \begin{definition}(\cite{su})
\noindent The neighbourhood (open neighbourhood) of a vertex $p$ in a graph $G=(V, E)$, is denoted by $N(p)$, and is defined as 
$N(p) = \{x \in V(G) | px \in E(G)\}.$ The closed neighborhood of a vertex $p$, denoted by $N[p]$, it is the set $\{p\} \cup
N(p)$.
 \end{definition}
  \begin{definition}(\cite{cht} )
A vertex $a\in V(G)$ resolves a pair of vertices $x$ and $y$, if $d(a,x)\neq d(a,y)$, where $x, y \in V(G)$.
 \end{definition}
 \begin{definition}(\cite{ha})		
\noindent Let  $A=\{a_1, a_2, a_3, \ldots,a_k\}$ be an ordered set of vertices in $G$. The metric coordinate$/$metric code of a vertex $s$ is denoted by $\delta(s|A)$, and defined as $\delta(s|A)=(d(s,a_1),d(s,a_2),d(s,a_3), \ldots,d(s,a_k))$. The set $A$ is a resolving set if {every two different vertices have distinct metric coordinates} with respect to the set $A$.
   \end{definition}   
    \begin{definition}(\cite{ha}) 
  A resolving set with minimum cardinality is called the metric basis for $G$ and its cardinality is termed as the MD of $G$. It is usually denoted by $dim(G).$ 
 \end{definition}
  \begin{definition}(\cite{cht})\label{defn1}
 \noindent A set $A$ in $G$ is an independent resolving set of $G$ if $A$ is both an independent and a resolving set.
 \end{definition}  
 \begin{definition}(\cite{cht}) \label{defn2}
 An independent resolving set with minimum cardinality is called the independent metric basis for $G$ and its cardinality is termed as the independent metric dimension of $G$. It is usually denoted by $idim(G)$. 
 \end{definition}  
\noindent \begin{proposition}(\cite{kh})
Let $G=(V,E)$ be a graph with MD 2 and let $B=\{a,b\}\subset V$ be a metric basis in $G$, then the following are true:
\begin{itemize}
\item There exists a unique shortest path $P$ between the vertices $a$ and $b$
\item The degrees of both $a$ and $b$ can never exceed 3.
\item Every other vertex on $P$ has a degree of at most five. 
\end{itemize}	
\end{proposition}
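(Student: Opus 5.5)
The approach is to work entirely with the metric codes $\delta(v|B)=(d(v,a),d(v,b))$. Since $B=\{a,b\}$ is resolving, the map $v\mapsto \delta(v|B)$ is injective. The plan is to bound how many codes each relevant set of vertices can take, using the triangle inequality, and let injectivity turn each count into a structural bound. Throughout, write $D=d(a,b)$ and note that every vertex $v$ satisfies $d(v,a)+d(v,b)\geq D$.

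\emph{Step 1 (unique shortest path).} A vertex $v$ lies on some shortest $a$--$b$ path exactly when $d(v,a)+d(v,b)=D$, so its code is $(i,D-i)$ with $0\leq i\leq D$. Every shortest path has one vertex at each distance $i$ from $a$. If two different shortest paths existed, they would differ at some position $i$, giving two distinct vertices with the same code $(i,D-i)$. This contradicts resolvability. Hence there is a unique shortest path $P=a=u_0,u_1,\ldots,u_D=b$, and it is the set of all vertices with coordinate sum $D$.

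\emph{Step 2 (degrees of $a$ and $b$).} Every neighbour $x$ of $a$ has $d(x,a)=1$. By the triangle inequality $|d(x,b)-d(a,b)|\leq 1$, so $d(x,b)\in\{D-1,D,D+1\}$. Thus the neighbours of $a$ take at most three codes, and injectivity gives $\deg(a)\leq 3$. The argument for $b$ is symmetric.

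\emph{Step 3 (internal vertices of $P$).} Let $u=u_i$ with $0<i<D$, so its code is $(i,D-i)$. For a neighbour $x$ of $u$, each coordinate changes by at most one, so $\delta(x|B)$ lies in the $3\times 3$ grid $\{i-1,i,i+1\}\times\{D-i-1,D-i,D-i+1\}$. Excluding $u$'s own code leaves 8 candidates. We then remove every candidate whose coordinate sum is less than $D$, since no vertex can have such a code. These are $(i-1,D-i-1)$, $(i-1,D-i)$ and $(i,D-i-1)$. Five codes remain, namely $(i-1,D-i+1)$, $(i,D-i+1)$, $(i+1,D-i-1)$, $(i+1,D-i)$ and $(i+1,D-i+1)$, so $\deg(u)\leq 5$.

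The only point needing care is Step 1: identifying the vertices on shortest paths with the codes of sum exactly $D$, and observing that two distinct shortest paths must disagree at some common index $i$. Once that is in place, Steps 2 and 3 are straightforward counting, and the key observation there is the lower bound $d(v,a)+d(v,b)\geq D$ on coordinate sums.
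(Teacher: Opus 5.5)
Your proof is correct, and it is essentially the standard argument. The paper gives no proof of this result and quotes it from \cite{kh}, where it is proved by the same code-counting reasoning: two distinct shortest paths would yield two vertices with code $(i,D-i)$, the neighbours of $a$ (or $b$) can take at most three codes, and an internal vertex of $P$ has at most five admissible neighbour codes once those with coordinate sum below $D$ are discarded.
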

   
\noindent\begin{proposition} (\cite{kh})
A graph $G=(V, E)$ has MD one $\iff$ $G$  is a path.
\end{proposition}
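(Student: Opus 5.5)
The statement to be proved is the cited characterization (\cite{kh}): $dim(G)=1$ if and only if $G$ is a path. I would prove the two implications separately, working only with the definitions of resolving set and metric coordinate given above. Throughout, $G$ is connected and nontrivial.

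\emph{Paths have MD one.} Let $G$ be the path $p_1,p_2,\ldots,p_n$ and take $A=\{p_1\}$. Then $d(p_j,p_1)=j-1$, which differs for each $j$, so every vertex has a distinct metric coordinate with respect to $A$. Hence $A$ is a resolving set and $dim(G)\le 1$. Since $G$ is nontrivial, the empty set does not resolve: every vertex has the same (empty) code. Therefore $dim(G)=1$.

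\emph{MD one forces a path.} Suppose $\{a\}$ is a resolving set. Then the map $v\mapsto d(v,a)$ is injective on $V(G)$. Let $e$ be the eccentricity of $a$. By connectedness every distance value $0,1,\ldots,e$ is attained, since along a shortest path from $a$ to a farthest vertex each intermediate distance occurs. Combined with injectivity, each distance level contains exactly one vertex, so $|V|=e+1$. Call these vertices $v_0=a,v_1,\ldots,v_e$ with $d(v_i,a)=i$. Each $v_i$ with $i\ge1$ has a neighbour at distance $i-1$, which must be $v_{i-1}$. Adjacent vertices have distances to $a$ differing by at most one, so no edge joins $v_i$ to $v_j$ with $|i-j|\ge2$. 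Edges within a level are impossible because each level is a single vertex. Hence $E(G)=\{v_{i-1}v_i\}$ and $G$ is the path $v_0v_1\cdots v_e$.

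The only step requiring any care is the second implication, specifically ruling out edges other than consecutive ones. This follows from the triangle inequality $|d(x,a)-d(y,a)|\le 1$ for adjacent $x,y$. Everything else is routine.
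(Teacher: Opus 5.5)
Your proof is correct and complete. An endpoint resolves a path, and your explicit nontriviality assumption is genuinely needed, since $K_1$ is a path of metric dimension $0$. Conversely, injectivity of $v\mapsto d(v,a)$ together with connectedness forces exactly one vertex per distance layer from $a$, with edges only between consecutive layers, so $G$ is the path $v_0v_1\cdots v_e$. The paper gives no proof of this proposition, only citing \cite{kh}, and your distance-layer argument is the standard proof of this fact.
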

\noindent \begin{proposition}(\cite{kh} \label{0})
     Let $T = (V, E)$ be a tree that is not a path. If $l_v$ is the number of legs attached to the
vertex v. Then
\[ dim(T ) =  \sum_{ v \in V:l_v>1  } l_v-1\]
 \end{proposition}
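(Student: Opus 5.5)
We prove the two inequalities separately. Throughout, a \emph{leg} at $v$ is a path from a neighbour of $v$ to a leaf whose internal vertices all have degree $2$. The number of such legs is $l_v$, and legs at different vertices are vertex-disjoint.

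\textbf{Lower bound.} Let $A$ be any resolving set, and fix $v$ with $l_v>1$. Suppose two legs $L_1,L_2$ at $v$ contain no vertex of $A$. Let $x_1\in L_1$ and $x_2\in L_2$ be the neighbours of $v$. For every vertex $a$ outside $L_1\cup L_2$, the shortest paths from $a$ to $x_1$ and to $x_2$ both pass through $v$. Hence $d(a,x_1)=d(a,v)+1=d(a,x_2)$, and $A$ does not resolve $x_1,x_2$, a contradiction. So $A$ meets at least $l_v-1$ of the legs at $v$. Since legs at distinct vertices are disjoint, $|A|\ge\sum_{l_v>1}(l_v-1)$.

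\textbf{Upper bound, construction.} For each $v$ with $l_v>1$, choose all but one of its legs, and put the leaf end of each chosen leg into $A$. Then $|A|=\sum_{l_v>1}(l_v-1)$.

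\textbf{Key lemma.} Let $c$ be a vertex and $B$ a component of $T-c$ with $B\cap A=\emptyset$. Then $B$ is a path, one of whose endpoints is adjacent to $c$ and the other is a leaf of $T$. To see this, suppose $B$ contains a vertex of degree $\ge 3$ in $T$, and take such a vertex $w$ farthest from $c$. All branches of $w$ pointing away from $c$ are then legs at $w$, and there are at least two of them. So $l_w\ge 2$, and $A$ meets a leg at $w$ inside $B$, a contradiction.

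\textbf{Resolving argument.} Suppose $x\neq y$ have the same code with respect to $A$. Let $P$ be the $x$--$y$ path, and for each $z\in P$ let $H_z$ be the subtree hanging at $z$ once the edges of $P$ are removed. A landmark $s\in H_z$ has $d(s,x)-d(s,y)=d(z,x)-d(z,y)$. This vanishes only if $z$ is the midpoint of $P$. Two cases arise.

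\begin{itemize}
\item If $P$ has odd length, there is no midpoint, so $A=\emptyset$. This is impossible, since $T$ is not a path and therefore some vertex has $l_v\ge2$.
\item If $P$ has even length with centre $c$, then $A\subseteq H_c$. So the two components of $T-c$ containing $x$ and $y$ contain no landmarks. By the lemma they are paths ending in leaves of $T$, i.e.\ legs at $c$.
\begin{itemize}
\item If $\deg c\ge 3$, these are two landmark-free legs at $c$. This contradicts the choice of $A$, which leaves at most one leg at $c$ unoccupied.
\item If $\deg c=2$, then $T$ consists of $c$ together with these two paths, so $T$ is a path, a contradiction.
\end{itemize}
\end{itemize}

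Hence $A$ resolves $T$, and equality holds. The main obstacle is this last step, which has to combine the lemma with the midpoint argument. The lower bound and the construction are routine.
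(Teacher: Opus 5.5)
The paper gives no proof of this proposition; it quotes it from \cite{kh}. So there is no in-paper argument to compare against, and your proposal has to stand on its own. It does: your proof is correct and self-contained. The lower bound is right: two landmark-free legs at $v$ leave the two neighbours of $v$ on those legs unresolved. The construction, taking the leaf of all but one leg at each $v$ with $l_v\ge 2$, is right as well. The verification that it resolves $T$ works, using the midpoint decomposition via the subtrees $H_z$ and your lemma on the farthest vertex of degree $\ge 3$; it correctly handles the odd-length case and the $\deg c=2$ case. You also establish the intended formula $\sum_{l_v>1}(l_v-1)$, not the literal reading $\bigl(\sum l_v\bigr)-1$ of the displayed statement.

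Three small points should be tightened.

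\begin{enumerate}
\item Read literally, your definition of a leg (``a path from a neighbour of $v$ to a leaf whose internal vertices have degree $2$'') puts no constraint on the neighbour itself. A neighbour of degree $\ge 3$ carrying two pendant leaves would then produce spurious legs at $v$, and in the lower bound you could even get $x_1=x_2$. Define a leg at $v$ instead as a component of $T-v$ that is a path, attached to $v$ at one end and ending at a leaf of $T$. This is the reading you actually use in the lemma and in the resolving argument.

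\item The claim that legs at different vertices are vertex-disjoint is false in general. A degree-$2$ vertex lying on a leg of $w$ has the tail of that leg as its own leg. What you need is disjointness among vertices with $l_v\ge 2$, and that is true:
\begin{enumerate}
\item Such a vertex $v'$ cannot lie on a leg of another vertex. If it did, it would have degree $\le 2$, and then $l_{v'}\ge 2$ would force $T$ to be a path.
\item Now let $C$ be a leg at $v$ and $C'$ a leg at $v'\neq v$, with $v'\notin C$ and $v\notin C'$. A common vertex $x$ would force the $x$--$v'$ path to pass through $v$ and the $x$--$v$ path to pass through $v'$, which is impossible in a tree.
\end{enumerate}

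\item The step ``$T$ not a path implies some $l_v\ge 2$'' is used in the odd-length case. It deserves the one-line argument you already give in the lemma: take a vertex of degree $\ge 3$ at maximum distance from a fixed vertex of degree $\ge 3$.
\end{enumerate}
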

\section{ The metric dimension of barycentric subdivision of  $\Gamma(\mathbb{Z}_n)$}
\noindent In this particular section, we will determine the MD of the barycentric subdivision of $\Gamma(\mathbb{Z}_n)$, where $n=pq$, $p$ and $q$ are two distinct primes with $q>p$.

\begin{theorem}
Let $p$ and $q$ be two distinct primes, where $q > p$. Suppose $R=\mathbb{Z}_n$, where $n=pq$, then
\begin{enumerate}
    \item For $p=2$, dim($BS(\Gamma(\mathbb Z_n))
)=q-2$
\item For $p=3$, $dim(BS(\Gamma(\mathbb Z
_n))
)=q-2$
\end{enumerate} 
\end{theorem}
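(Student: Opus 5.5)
For $p=2$ and $p=3$ I would first identify $\Gamma(\mathbb Z_{pq})$ explicitly. The nonzero zero divisors are the nonzero multiples of $q$ ($p-1$ of them) and the nonzero multiples of $p$ ($q-1$ of them). Two of them multiply to $0$ exactly when one is a multiple of $p$ and the other a multiple of $q$, so $\Gamma(\mathbb Z_{pq})\cong K_{p-1,q-1}$.

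\emph{Case $p=2$.} Here $\Gamma(\mathbb Z_{2q})=K_{1,q-1}$ is a star. Its barycentric subdivision is a spider: a centre $c$ with $q-1$ legs, each a path of length $2$. This is a tree that is not a path, because $q-1\ge 2$; indeed $q\ge 3$ and $q-1\ge 2$ legs already exceed a path once $q\ge 4$, while for $q=3$ the spider is a path $P_5$ and the formula still gives $q-2=1$, consistent with Proposition \ref{0}'s companion result that $dim(G)=1$ iff $G$ is a path. The only vertex with $l_v>1$ is $c$, with $l_c=q-1$. Proposition \ref{0} then gives $dim=q-2$ directly.

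\emph{Case $p=3$.} Here $\Gamma(\mathbb Z_{3q})=K_{2,q-1}$, with parts $\{a_1,a_2\}$ and $\{b_1,\dots,b_{q-1}\}$. In $BS$, each $b_j$ is joined to $a_1$ through a new vertex $x_j$ and to $a_2$ through a new vertex $y_j$. So the graph is the union of $q-1$ internally disjoint ``ears'' $a_1x_jb_jy_ja_2$, each a path of length $4$.

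\emph{Lower bound.} For any vertex $w$ outside ear $j$,
\[d(w,x_j)=d(w,a_1)+1,\qquad d(w,y_j)=d(w,a_2)+1,\qquad d(w,b_j)=\min\{d(w,a_1),d(w,a_2)\}+2.\]
These values do not depend on $j$. Hence if two ears $j\neq k$ both contain no vertex of a resolving set $A$, then $\delta(x_j|A)=\delta(x_k|A)$, a contradiction. So at most one ear is landmark-free, and since the ears' interiors are disjoint, $|A|\ge q-2$.

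\emph{Upper bound.} I would take $A=\{y_1,x_2,x_3,\dots,x_{q-2}\}$, leaving ear $q-1$ free. The obvious choice of all $x_i$ fails, because then $b_{q-1}$ and $a_2$ both have code $(3,\dots,3)$; switching one landmark to the $y$-side is meant to break exactly this tie. For $q=5$ the set has size $3$; the case $q\le 3$ does not arise since $q>p=3$.

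I would then tabulate codes. From a landmark $x_i$, the distances are: $a_1:1$, $b_i:1$, $y_i:2$, $a_2:3$; and for $k\ne i$, $x_k:2$, $b_k:3$, $y_k:4$. From $y_1$ the table is symmetric with $a_1,a_2$ and $x,y$ interchanged. For vertices in landmark ears, the coordinate at their own landmark, either $0$ or a value in $\{1,2,3\}$ different from the common background value, identifies both the ear and the position within it. The remaining vertices are $a_1,a_2,x_{q-1},b_{q-1},y_{q-1}$, and their codes should be
\[(3,1,\dots,1),\quad (1,3,\dots,3),\quad (4,2,\dots,2),\quad (3,3,\dots,3),\quad (2,4,\dots,4)\]
respectively. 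The main obstacle is this bookkeeping: I must check that no landmark-ear vertex collides with one of these five codes. For example, $b_k$ with $k\ge 2$ has code with $3$ at the $y_1$-coordinate and a single $1$ elsewhere, which is not all $3$'s once $q-2\ge 3$. The vertices in ear $1$, coded by their $y_1$-coordinate and a constant pattern elsewhere, need the same check. Once this verification is done, $dim=q-2$.
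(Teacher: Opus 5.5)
\textbf{Your $p=2$ case and your $p=3$ lower bound are fine; your $p=3$ upper-bound set is not resolving.}

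The $p=2$ case is Slater's leg formula for the spider, as in the paper. The $p=3$ lower bound is also the paper's argument: the ear interiors $N[b_j]$ are pairwise disjoint, so with at most $q-3$ landmarks two ears are landmark-free and contain twins. One slip there: $d(w,x_j)=d(w,a_1)+1$ is false for $w=a_2$, since $d(a_2,x_j)=3$ via $y_jb_j$. The correct value $\min\{d(w,a_1)+1,\,d(w,a_2)+3\}$ is still independent of $j$, so the bound $|A|\ge q-2$ survives.

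\textbf{Why $A=\{y_1,x_2,\dots,x_{q-2}\}$ fails.} Both $a_2$ and $b_1$ are adjacent to $y_1$. Both are at distance $3$ from every $x_i$ with $i\ge 2$, via $a_2y_ib_ix_i$ and $b_1x_1a_1x_i$ respectively. Hence
$\delta(b_1|A)=\delta(a_2|A)=(1,3,\dots,3)$, which is exactly the code you listed for $a_2$. For $q=5$ both vertices get $(1,3,3)$.

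So switching one landmark to the $y$-side does not remove the tie between $a_2$ and $b_{q-1}$; it moves it to $b_1$. The principle behind your deferred check is false here. You assumed a vertex's coordinate at its own ear's landmark identifies the ear, but the hub adjacent to that landmark has the same coordinate. The same collision, between $b_j$ and $a_2$, occurs for every choice with a single $y_j$ and all other landmarks on the $x$-side.

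\textbf{The fix, which is the paper's construction.} Put the landmark on the middle vertex of the first ear. The paper uses $E=\{a_1,b^1_2,\dots,b^1_{q-2}\}$, which in your notation is $\{b_1,x_2,\dots,x_{q-2}\}$. Listing $d(\cdot,b_1)$ first, the codes are:
\begin{itemize}
\item $a_1\mapsto(2;1,\dots,1)$ and $a_2\mapsto(2;3,\dots,3)$;
\item $x_1\mapsto(1;2,\dots,2)$ and $y_1\mapsto(1;4,\dots,4)$;
\item for $2\le i\le q-2$: $x_i\mapsto(3;\dots)$, $b_i\mapsto(4;\dots)$, $y_i\mapsto(3;\dots)$, each with a single entry $0,1,2$ respectively in position $i$ against a background of $2,3,4$;
\item $x_{q-1}\mapsto(3;2,\dots,2)$, $b_{q-1}\mapsto(4;3,\dots,3)$, $y_{q-1}\mapsto(3;4,\dots,4)$.
\end{itemize}
Since $q\ge 5$ gives $q-3\ge 2$ tail entries, these codes are pairwise distinct.
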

\begin{proof}
\noindent{\textbf{Case 1: $ p=2$ }}\\
\begin{figure}[ht!]
    \centering
    \includegraphics[width=10cm]{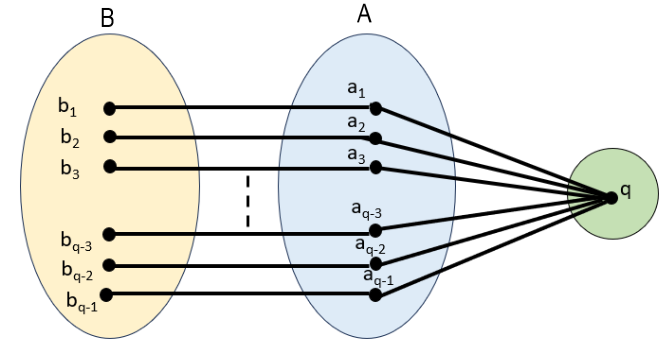}
    \caption{ Barycentric subdivision of zero divisor graph of $\mathbb{Z}_{2q}$}
    \label{Fig:1}
\end{figure}
\noindent Consider a ring \( R = \mathbb{Z}_n \), where \( n = 2q \), and let \( \Gamma(R) \) represent its zero divisor graph. We denote \( BS(\Gamma(R)) \) to be the barycentric subdivision of \( \Gamma(R) \). The graph \( BS(\Gamma(R)) \) consists of \( 2q - 1 \) vertices, and \( 2q - 2 \) edges, as illustrated in Figure \ref{Fig:1}. \noindent The graph \( BS(\Gamma(\mathbb{Z}_{2q})) \) is a tree. From Proposition \ref{0}, we can conclude that the \( dim(BS(\Gamma(\mathbb{Z}_{2q}))) \) is \( q - 2 \). This concludes the proof for case 1.\\

\noindent\textbf{Case 2: $ p=3$ }\\
 \begin{figure}[ht!]
    \centering
    \includegraphics[width=12cm]{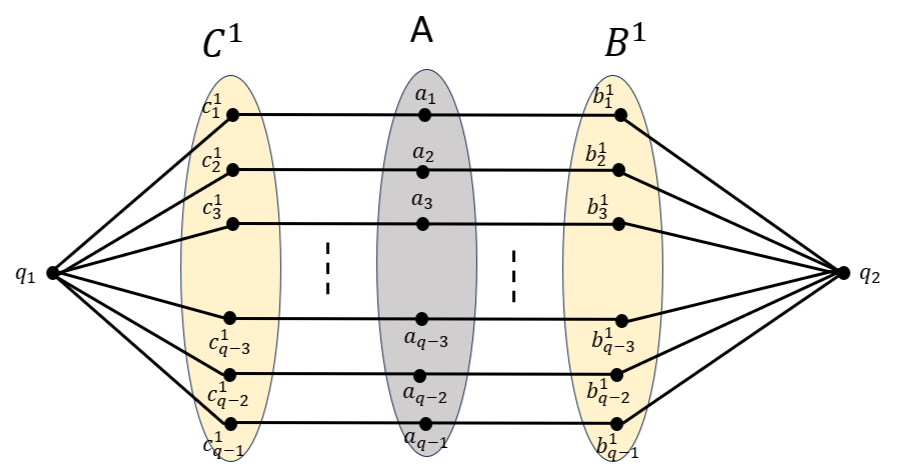}
    \caption{Barycentric Subdivision of Zero Divisor Graph of $\mathbb{Z}_{3q}$}
    \label{fig:2}
\end{figure}
\noindent Consider a ring $R =\mathbb{Z}_n$, where $n=3q$ (where $q>p$), and let $\Gamma(R)$ represent its zero divisor graph. We denote the $BS(\Gamma(R))$ as the barycentric subdivision of $\Gamma(R)$. The graph $BS(\Gamma(R))$ consists of $3q-1$ vertices, and $4(q-1)$ edges. In particular, we have partioned the vertex set of $BS(\Gamma(R))$ into four sets: $A=\{a_1,a_2,\ldots,a_{q-1}\}$, $B^1=\{b^1_1,b^1_2,\ldots,b^1_{q-1}\}$, $C^1=\{c^1_1,c^1_2,c^1_3,\ldots,c^1_{q-1} \}$, and $Q=\{q_1,q_2\}$. These sets are illustrated in Figure \ref{fig:2}.\\

\noindent Let us consider a set $E=\{a_1,b^1_2,b^1_3,\ldots, b^1_{q-3},b^1_{q-2}\}$. Then, to show that $E$  is a resolving set of $BS(\Gamma(R))$, we need to assign unique metric coordinates for every vertex of $V(BS(\Gamma(R)))-E$ with respect to set $E$. \\
 The metric coordinates for the vertices $\{a_ \varrho:2\leq \noindent \varrho\leq q-1\}$ are given below
 \begin{equation*}
               \delta(a_{\varrho}|E)=
                   \begin{cases}
                   (4_{\{1\}},1_{\{2 \}},3_{\{3\}},\ldots,3_{\{q-3\}},3_{\{q-2\}}) & \textit{if} \quad \varrho =2 ;\\
                   (4_{\{1\}},3_{\{2\}},\ldots,3_{\{\varrho -1\}},1_{\{ \varrho \}},3_{\{\varrho +1\}},\ldots,3_{\{q-3\}},3_{\{q-2\}}) & \textit{if} \quad 3 \leq \varrho \leq q-3  ;\\
                    (4_{\{1\}},3_{\{2\}},\ldots,3_{\{q-3\}},1_{\{q-2\}}) & \textit{if} \quad \varrho =q-2  ;\\
                    (4_{\{1\}},3_{\{2\}},\ldots,3_{\{q-3\}},3_{\{q-2\}}) & \textit{if}\quad \varrho= q-1.\\
                    \end{cases}
                     \end{equation*}
                     \noindent The metric coordinates for the vertices 
                     \noindent$b^1_1$ and $b^1_{q-1}$  are given below
                     \begin{center}
                          ${\delta(b^1_{1}|E)= (1_{\{ 1\}},2_{\{2\}},\ldots,2_{\{q-2\}}) }$\\
                   \noindent${\delta(b^1_{q-1}|E)= (3_{\{1\}},2_{\{2\}},\ldots,2_{\{q-2\}})}$ \\
                     \end{center}
                     \noindent The metric coordinates for the vertices 
                     $\{q_1,q_2\}$ are given below
                     \begin{center}
\noindent $\delta(q_1|E)=(2_{\{1\}},3_{\{2\}},3_{\{3\}},\ldots,3_{\{q-3\}},3_{\{q-2\}})$\\
\noindent $ \delta(q_2|E)=(2_{\{1\}},1_{\{2\}},1_{\{3\}},\ldots,1_{\{q-3\}},1_{\{q-2\}})$\\
\end{center}
\noindent The metric coordinates for the vertices 
                     $\{c^{1}_{\varrho}:1\leq \varrho\leq q-1\}$ are given below
                        \begin{equation*}
                        \delta({c^{1}_{\varrho}}|E)=
                   \begin{cases}
                   (1_{\{ 1 \}},4_{\{2\}},\ldots,4_{\{q-2\}},4_{\{q-2\}}) &\textit{if}\quad \varrho=1;\\
                   (3_{\{1\}},2_{\{2 \}},4_{\{3\}},\ldots,4_{\{q-3\}},4_{\{q-2\}}) &\textit{if}\quad \varrho=2 ;\\
                    (3_{\{1\}},4_{\{2\}},\ldots,4_{\{\varrho -1\}},2_{\{ \varrho \}},4_{\{\varrho +1\}},\ldots,4_{\{q-2\}}) & \textit{if}\quad 3 \leq  \varrho $ $\leq q-3  ;\\
                    (3_{\{1\}},4_{\{2\}},\ldots,4_{\{q-3\}},2_{\{q-2\}}) &\textit{if}\quad \varrho=q-2 ;\\
                    (3_{\{1\}},4_{\{2\}},4_{\{3\}},\ldots,4_{\{q-3\}},4_{\{q-2\}}) &\textit{if} \quad \varrho= q-1.\\
                    \end{cases}
			\end{equation*}
 \noindent Based on the  above observation, all vertices of $BS(\Gamma(R))$ have a unique metric coordinate with respect to the set $E$. It follows that the $dim(BS(\Gamma(R))) \leq q-2$. Next, we need to show that $dim(BS(\Gamma(R)))$ is greater than or equal to $q-2$. For this, we assume that $dim(BS(\Gamma(R)))$ is less than or equal to $q-3$. However, this leads to a contradiction, This can be illustrated below.\\
 
\noindent Let $F$ be a set, which contains any $q-3$ vertices of $D$ (where $D= A\cup B^1 \cup C^1  \cup Q $). Since set $A$ has $q-1$ vertices, and satisfies the condition $N[a_r]\cap  N[a_s]=\emptyset $ (where $a_s$, $a_r \in A$, and $1\leq r \neq s \leq q-1$), there will exist at least two vertices in $A$, denoted as $a_{u} $ and $a_{v} $ (where $1\leq u, v \leq q-1, $ $u \neq v$), and those two vertices not adjacent to any vertex in $F$, and also not belonging to $F$. From Remark \ref{11} vertices $a_u $ and $a_v $ have the same metric coordinates with respect to set $F$, i.e.,      
 $\delta(a_{u}|F) =\delta(a_{v}|F)$.
 This indicates that $dim(BS(\Gamma(R)))\geq q-2$. Thus $dim(BS(\Gamma(R)))= q-2.$ 

\end{proof}
\begin{remark}\label{11}
 From Figure \ref{fig:2}, it is evident that, if $d(a_i,x)\neq 1$ and  $d(a_j,x) \neq 1$ (where $1\leq i, j \leq q-1$, $i \neq j$ and $x\in V(BS(\Gamma(\mathbb{Z}_{3q})))\backslash\{a_i,a_j\}$, then $d(a_i,x)=d(a_j,x)$.   
  \end{remark}

\begin{theorem}\label{b}
Let $p$ and $q$ be two distinct odd primes, where $p\geq 5$, and $q\geq 2p -1$. Suppose $R=\mathbb{Z}_n$, where $n=pq$, then,
      dim($BS(\Gamma(R)))
)=q-2$.
\end{theorem}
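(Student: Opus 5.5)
Throughout, we use the structure of $\Gamma(\mathbb{Z}_{pq})$: it is the complete bipartite graph $K_{p-1,q-1}$. One side consists of the $q-1$ multiples of $p$, which we call $A=\{a_1,\ldots,a_{q-1}\}$. The other side consists of the $p-1$ multiples of $q$, which we call $Q=\{q_1,\ldots,q_{p-1}\}$. In $BS(\Gamma(R))$ each edge $a_iq_k$ is replaced by a path $a_i\,s_{ik}\,q_k$. The first step is to tabulate all distances, which are easy to read off:
\begin{itemize}
\item $d(a_i,a_j)=4$, $d(q_k,q_l)=4$ and $d(a_i,q_k)=2$;
\item $d(s_{ik},a_j)$ equals $1$ if $j=i$ and $3$ otherwise, and symmetrically for $d(s_{ik},q_l)$;
\item $d(s_{ik},s_{jl})$ equals $2$ if exactly one of $j=i$, $l=k$ holds, and $4$ if neither holds.
\end{itemize}
In particular, distances from vertices of $A\cup Q$ to $a_1$ are even, while distances from the subdivision vertices to $a_1$ are odd.

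\textbf{Lower bound.} We repeat the argument of the case $p=3$. The closed neighbourhoods $N[a_i]=\{a_i,s_{i1},\ldots,s_{i,p-1}\}$ are pairwise disjoint. Hence any set $F$ with $|F|\le q-3$ misses at least two of them, say $N[a_u]$ and $N[a_v]$. Take any $x\notin N[a_u]\cup N[a_v]$. From the table, $d(a_u,x)=d(a_v,x)$: the common value is $4$ on $A$, $2$ on $Q$, and $3$ on the remaining subdivision vertices. This is the analogue of Remark \ref{11}. So $\delta(a_u|F)=\delta(a_v|F)$, and therefore $\dim(BS(\Gamma(R)))\ge q-2$.

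\textbf{Upper bound.} We take $E=\{a_1\}\cup\{s_{j,\sigma(j)}:2\le j\le q-2\}$, which has $q-2$ vertices. Here $\sigma:\{2,\ldots,q-2\}\to\{1,\ldots,p-2\}$ is chosen so that every column $k\le p-2$ has $|\sigma^{-1}(k)|\ge 2$, and column $p-1$ is never used. This requires $2(p-2)\le q-3$, which is exactly the hypothesis $q\ge 2p-1$. We then check that codes are distinct, splitting by the parity of the $a_1$-coordinate.

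\emph{Even class} ($A\cup Q$). The coordinate at $a_1$ is $0$ for $a_1$, $4$ for the other $a_i$, and $2$ for every $q_k$, so $A$ is separated from $Q$.
\begin{itemize}
\item For $2\le i\le q-2$, $a_i$ has a single $1$, at position $i$. The vertex $a_{q-1}$ has all remaining coordinates equal to $3$.
\item $q_k$ has $1$'s exactly on $\sigma^{-1}(k)$. These sets are distinct for different $k$, and empty only for $k=p-1$.
\end{itemize}

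\emph{Odd class} (subdivision vertices).
\begin{itemize}
\item The coordinate at $a_1$ is $1$ exactly for row $1$. A vertex $s_{1k}$ has $2$'s exactly on $\sigma^{-1}(k)$, so these vertices are distinct.
\item For $2\le i\le q-2$ and $k\ne\sigma(i)$, the $2$'s of $s_{ik}$ sit on $\{i\}\cup\sigma^{-1}(k)$. The landmark $s_{i,\sigma(i)}$ itself has coordinate $0$.
\item The $2$'s of $s_{q-1,k}$ sit on $\sigma^{-1}(k)$.
\end{itemize}

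\textbf{Main obstacle.} The key step is to show that the sets $\{i\}\cup\sigma^{-1}(k)$ with $k\ne\sigma(i)$, together with the sets $\sigma^{-1}(k)$ coming from row $q-1$, are pairwise distinct. We will argue by cardinality and membership.
\begin{itemize}
\item Since $k\ne\sigma(i)$, we have $i\notin\sigma^{-1}(k)$, so every set in the first family has size either $1$ (when $k=p-1$) or at least $3$.
\item A set in the second family has size $0$ (when $k=p-1$) or at least $2$.
\item Every nonempty set $\sigma^{-1}(k)$ is a single $\sigma$-fibre. In contrast, $\{i\}\cup\sigma^{-1}(k)$ contains exactly one element lying outside that fibre, namely $i$. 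From this, both $i$ and $k$ can be recovered.
\end{itemize}
Together these show that all the codes are distinct, which is precisely where the condition $q\ge 2p-1$ enters. Combining this with the lower bound gives $\dim(BS(\Gamma(R)))=q-2$.
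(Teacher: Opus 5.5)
Your proposal is correct and follows the paper's proof. The lower bound is the same disjoint closed-neighbourhood argument on the vertices $a_i$, and your set $E=\{a_1\}\cup\{s_{j,\sigma(j)}:2\le j\le q-2\}$ is the paper's resolving set written abstractly: the paper assigns rows $2,3$ to $C^1$, rows $4,5$ to $C^2$, and so on through the $C$- and $B$-columns, puts the remaining rows into $B^{\frac{p-3}{2}}$, and leaves $B^{\frac{p-1}{2}}$ unused; your fibre-based check replaces the paper's explicit coordinate lists and shows more transparently why every used column needs at least two landmarks, i.e. why $q\ge 2p-1$.
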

\begin{figure}[ht!]
    \centering
    \includegraphics[width=14cm]{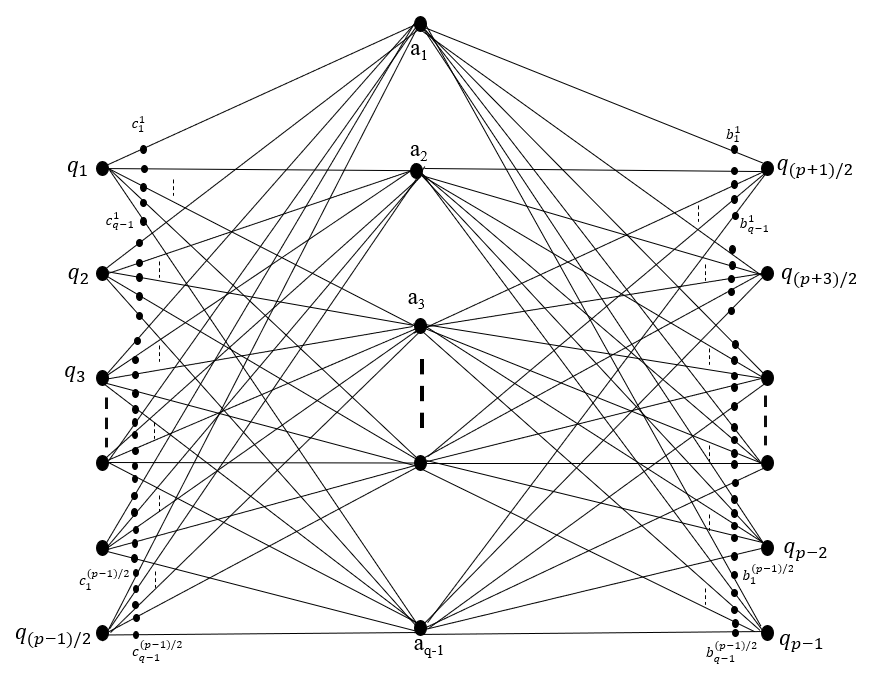}
    \caption{Barycentric Subdivision of Zero Divisor Graph of $\mathbb{Z}_{pq}$}
    \label{fig:4}
\end{figure}
\begin{proof}
  

\noindent Consider the ring \( R = \mathbb{Z}_n \), where \( n = pq \), and \( q \geq 2p - 1 \). Let \( \Gamma(R) \) represent the zero divisor graph of \( R \). We denote \( BS(\Gamma(R)) \) as the barycentric subdivision of \( \Gamma(R) \). The graph \( BS(\Gamma(R)) \) consists of \( pq - 1 \) vertices, and \( 2(p - 1)(q - 1) \) edges. In particular, we partitioned the vertex set of $BS(\Gamma(R))$ into 4 sets namely $A=\{a_1,a_2,\ldots, a_{q-1}\}$,  $B=\{b^\nu_i\}$, $C=\{c^\nu_i\}$, where $1\leq \nu\leq \frac{p-1}{2}$, and $1\leq i\leq {q-1}$ ($B^\nu=\{b^\nu_1,b^\nu_2,\ldots,b^\nu_{q-1}\}$, where $1\leq \nu \leq \frac{p-1}{2}$ are the subsets of $B$, and $C^\nu=\{c^\nu_1,c^\nu_2,\ldots,c^\nu_{q-1}\}$, where $1\leq \nu \leq \frac{p-1}{2}$  are the subsets of $C$), and $Q=\{q_1,q_2, \ldots, q_{p-1}\}$. These sets are illustrated in Figure \ref{fig:4}.\\

 \noindent Let us consider a set $E=\{   a _1, c^1_2,c^1_3,c^2_4,c^2_5, \ldots,c^\frac{p-1}{2}_{p-1},c^\frac{p-1}{2}_{p}, b^1_{p+1}, b^1_{p+2}, \ldots,b^\frac{p-5}{2}_{2p-6},b^\frac{p-5}{2}_{2p-5},b^\frac{p-3}{2}_{2p-4}, \ldots ,b^\frac{p-3}{2}_{q-2}\}$. 
Then, to show that $E$  is a resolving set of $BS(\Gamma(R))$, we need to assign unique metric coordinates for every vertex of $V(BS(\Gamma(R)))-E$ with respect to set $E$. \\
 The metric coordinates for the vertices $\{a_ \varrho:2\leq \noindent \varrho\leq q-1\}$ are given below
 \begin{equation*}
               \delta(a_{\varrho}|E)=
                   \begin{cases}
                   (4_{\{1\}},1_{\{ 2 \}},3_{\{3\}},\ldots,3_{\{q-3\}},3_{\{q-2\}}) &\textit{if}\quad \varrho= 2;\\
                  
                    (4_{\{1\}},3_{\{2\}},\ldots,3_{\{\varrho -1\}},1_{\{ \varrho \}},3_{\{\varrho +1\}},\ldots,3_{\{q-2\}}) &\textit{if}\quad 3\leq \varrho $ $\leq q-3  ;\\
                    (4_{\{1\}},3_{\{2\}},\ldots,3_{\{q-3\}},1_{\{q-2\}}) &\textit{if}\quad \varrho= q-2;\\
                  
                    (4_{\{1\}},3_{\{2\}},3_{\{3\}},\ldots,3_{\{q-3\}},3_{\{q-2\}}) &\textit{if}\quad \varrho= q-1.\\
                \end{cases}
                     \end{equation*}
                     \noindent The metric coordinates for the vertices 
                     $\{b^\nu_\varrho:1\leq \nu\leq \frac{p-5}{2} $ $ \& $  $1\leq \varrho\leq q-1 \}$ are given below
                        \begin{equation*}
                        \delta(b^{\nu}_{\varrho}|E)=
                   \begin{cases}
                   (1_{\{1\}},4_{\{2\}},\ldots,4_{\{2\nu+p-2\}},2_{\{p+2\nu-1\}},2_{\{p+2\nu\}},4_{\{p+2\nu+1\}},\ldots,4_{\{q-2\}})\\ \qquad \qquad \qquad \qquad \qquad \qquad \qquad \qquad \qquad  \qquad \qquad\qquad\qquad\quad\textit{if}\quad\varrho= 1;\\
                    (3_{\{1\}},2_{\{2 \}},4_{\{3\}},\ldots,4_{\{2\nu+p-2\}},2_{\{p+2\nu-1\}},2_{\{p+2\nu\}},4_{\{p+2\nu+1\}},\ldots,4_{\{q-2\}})\\
\qquad \qquad \qquad \qquad \qquad \qquad \qquad \qquad \qquad  \qquad \qquad\qquad\qquad\quad\textit{if}\quad   \varrho= 2;\\
(3_{\{1\}},4_{\{2\}}, \ldots,4_{\{\varrho-1 \}},2_{\{ \varrho \}},4_{\{\varrho +1\}},\ldots,4_{\{2\nu+p-2\}},2_{\{p+2\nu-1\}},2_{\{p+2\nu\}},4_{\{p+2\nu+1\}},\ldots,4_{\{q-2\}})\\
\qquad \qquad \qquad \qquad \qquad \qquad \qquad \qquad \qquad  \qquad \qquad\qquad\qquad\quad\textit{if} \quad  3 \leq  \varrho  \leq 2\nu+p-3 ;\\

                     (3_{\{1\}},4_{\{2\}}, \ldots,4_{\{\varrho-1 \}},2_{\{ 2\nu+p-2 \}},2_{\{p+2\nu-1\}},2_{\{p+2\nu\}},4_{\{p+2\nu+1\}},\ldots,4_{\{q-2\}})\\
\qquad \qquad \qquad \qquad \qquad \qquad \qquad \qquad \qquad  \qquad \qquad\qquad\qquad\quad\textit{if}\quad \varrho = 2\nu+p-2 ;\\
                     (3_{\{1\}},4_{\{2\}}, \ldots,4_{\{2\nu+p-2\}},2_{\{p+2\nu-1\}},2_{\{p+2\nu\}},2_{\{p+2\nu+1\}}, 4_{\{p+2\nu+2\}}, \ldots,,4_{\{q-2\}})\\
\qquad \qquad \qquad \qquad \qquad \qquad \qquad \qquad \qquad  \qquad \qquad\qquad\qquad\quad\textit{if}\quad\varrho= p+2\nu+1 ;\\
                    (3_{\{1\}},4_{\{2\}}, \ldots,4_{\{2\nu+p-2\}},2_{\{p+2\nu-1\}},2_{\{p+2\nu\}},4_{\{p+2\nu+1\}},\ldots,4_{\{\varrho-1 \}},2_{\{ \varrho \}},4_{\{\varrho +1\}},\ldots,4_{\{q-2\}})
\\
\qquad \qquad \qquad \qquad \qquad \qquad \qquad \qquad \qquad  \qquad \qquad\qquad\qquad\quad \textit{if} \quad p+2\nu+2 \leq  \varrho  \leq q-3  ;\\
                     (3_{\{1\}},4_{\{2\}}, \ldots,4_{\{2\nu+p-2\}},2_{\{p+2\nu-1\}},2_{\{p+2\nu\}},4_{\{p+2\nu+1\}},\ldots, 4_{\{q-3\}},2_{\{q-2\}})\\
\qquad \qquad \qquad \qquad \qquad \qquad \qquad \qquad \qquad  \qquad \qquad\qquad\qquad\quad\textit{if}\quad   \varrho = q-2  ;\\
                    (3_{\{1\}},4_{\{2\}}, \ldots,4_{\{2\nu+p-2\}},2_{\{p-1+2\nu\}},2_{\{p+2\nu\}},4_{\{p+2\nu+1\}},\ldots,4_{\{q-2\}}) \\
\qquad \qquad \qquad \qquad \qquad \qquad \qquad \qquad \qquad  \qquad \qquad\qquad\qquad\quad \textit{if}\quad\varrho= q-1.\\
                      \end{cases}
                    \end{equation*}
                    \noindent The metric coordinates for the vertices 
                     $\{b^\nu_\varrho: \nu =\frac{p-3}{2} $ $\&$  $1\leq \varrho\leq q-1 \}$ are given below
                        \begin{equation*}
                        \delta(b^{\nu}_{\varrho}|E)=
                   \begin{cases}
                   (1_{\{1\}},4_{\{2\}},\ldots,4_{\{2p-5 \}},2_{\{2p-4 \}}, \ldots,2_{\{q-3\}},2_{\{q-2\}})\\
\qquad \qquad \qquad \qquad \qquad \qquad \qquad \qquad \qquad  \qquad \qquad\qquad\qquad\quad\textit{if}\quad\varrho= 1;\\
                    (3_{\{1\}},2_{\{2\}},4_{\{3\}}, \ldots, 4_{\{2p-5 \}},2_{\{2p-4 \}}, \ldots,2_{\{q-3\}},2_{\{q-2\}}) 
                    \\
\qquad \qquad \qquad \qquad \qquad \qquad \qquad \qquad \qquad  \qquad \qquad\qquad\qquad\quad\textit{if}\quad\varrho= 2;\\ 
                   (3_{\{1\}},4_{\{2\}}, \ldots,4_{\{\varrho-1\}},2_{\{ \varrho\}},4_{\{\varrho +1\}}, \ldots, 4_{\{2p-5 \}},2_{\{2p-4 \}}, \ldots,2_{\{q-3\}},2_{\{q-2\}})\\
\qquad \qquad \qquad \qquad \qquad \qquad \qquad \qquad \qquad  \qquad \qquad\qquad\qquad\quad\textit{if}\quad   3 \leq  \varrho  \leq 2p-6  ;\\
                      (3_{\{1\}},4_{\{2\}}, \ldots, 4_{\{2p-6 \}} 2_{\{2p-5 \}},2_{\{2p-4 \}}, \ldots,2_{\{q-3\}},2_{\{q-2\}})\\
\qquad \qquad \qquad \qquad \qquad \qquad \qquad \qquad \qquad  \qquad \qquad\qquad\qquad\quad\textit{if}\quad   \varrho= 2p-5;\\
                    (3_{\{1\}},4_{\{2\}}, \ldots, 4_{\{2p-5\}},2_{\{2p-4\}}, \ldots,2_{\{q-3\}},2_{\{q-2\}})\\ \qquad \qquad \qquad \qquad \qquad \qquad \qquad \qquad \qquad  \qquad \qquad\qquad\qquad\quad\textit{if}\quad\varrho= q-1.\\
                     \end{cases}
                    \end{equation*}

                    \noindent The metric coordinates for the vertices 
                     $\{b^\nu_\varrho: \nu =\frac{p-1}{2} $ $\&$  $1\leq \varrho\leq q-1 \}$ are given below
                        \begin{equation*}
                        \delta(b^{\nu}_{\varrho}|E)=
                   \begin{cases}
                   (1_{\{1\}},4_{\{2\}},\ldots,4_{\{q-3\}},4_{\{q-2\}})&\textit{if}\quad\varrho= 1;\\
                   (3_{\{1\}},2_{\{ 2\}},4_{\{3\}}, \ldots,4_{\{q-3\}},4_{\{q-2\}})&\textit{if}\quad   \varrho= 2;\\
                   (3_{\{1\}},4_{\{2\}}, \ldots,4_{\{\varrho-1\}},2_{\{ \varrho\}},4_{\{\varrho +1\}},\ldots,4_{\{q-3\}},4_{\{q-2\}})&\textit{if}\quad  3\leq  \varrho  \leq q-3  ;\\
                      (3_{\{1\}},4_{\{2\}},\ldots,4_{\{q-3\}},2_{\{q-2\}})& \textit{if}\quad \varrho = q-2  ;\\
                (3_{\{1\}},4_{\{2\}}, \ldots,4_{\{q-3\}},4_{\{q-2\}})&\textit{if}\quad\varrho= q-1.\\
                      \end{cases}
                    \end{equation*}
                     \noindent The metric coordinates for the vertices 
                     $\{c^1_\varrho: 1\leq \varrho\leq q-1 \}$ are given below
                        \begin{equation*}
                        \delta(c^{1}_{\varrho}|E)=
                   \begin{cases}
                   (1_{\{1\}},2_{\{2\}},2_{\{3\}},4_{\{4\}},\ldots,4_{\{q-2\}})&\textit{if}\quad\varrho= 1;\\
                     (3_{\{1\}},2_{\{2\}}, 2_{\{3 \}},2_{\{ 4 \}}, 4_{\{ 5 \}},\ldots,4_{\{q-2\}})&\textit{if}\quad  \varrho= 4;\\
                     (3_{\{1\}},2_{\{2\}}, 2_{\{3 \}},4_{\{ 4 \}}, \ldots, 4_{\{\varrho-1 \}},2_{\{\varrho \}}, 4_{\{\varrho+1 \}},\ldots,4_{\{q-2\}})&\textit{if}\quad 5 \leq  \varrho  \leq q-3  ;\\
                     (3_{\{1\}},2_{\{2\}}, 2_{\{3 \}},4_{\{ 4 \}},\ldots, 4_{\{q-3\}},2_{\{q-2\}})&\textit{if}\quad \varrho= q-2;\\
                    (3_{\{1\}},2_{\{2\}},,2_{\{3\}}, 4_{\{4\}},\ldots,4_{\{q-2\}}) &\textit{if}\quad\varrho= q-1.\\\\
                    
                    \end{cases}
                    \end{equation*}

                    \noindent The metric coordinates for the vertices 
                     $\{c^2_\varrho: 1\leq \varrho\leq q-1 \}$ are given below
                        \begin{equation*}
                        \delta(c^{2}_{\varrho}|E)=
                   \begin{cases}
                   (1_{\{1\}},4_{\{2\}},4_{\{3\}},2_{\{4\}}, 2_{\{5\}}, 4_{\{6\}}, \ldots,4_{\{q-2\}})&\textit{if}\quad\varrho= 1;\\
                   (3_{\{1\}},2_{\{2\}},4_{\{3\}},2_{\{4\}}, 2_{\{5\}}, 4_{\{6\}},\ldots,4_{\{q-2\}})&\textit{if}\quad\varrho= 2;\\
                     (3_{\{1\}},4_{\{2\}},2_{\{3\}},2_{\{4\}}, 2_{\{5\}}, 4_{\{6\}},\ldots,4_{\{q-2\}})&\textit{if}\quad\varrho= 3;\\
                     (3_{\{1\}},4_{\{2\}},4_{\{3\}},2_{\{4\}}, 2_{\{5\}}, 2_{\{6\}}, 4_{\{7\}}, \ldots,4_{\{q-2\}})&\textit{if}\quad\varrho= 6;\\
                     (3_{\{1\}},4_{\{2\}}, 4_{\{3 \}},2_{\{ 4 \}}, 2_{\{ 5 \}}, 4_{\{ 6 \}}, \ldots, 4_{\{\varrho-1 \}},2_{\{\varrho \}}, 4_{\{\varrho+1 \}},\ldots,4_{\{q-2\}})&\textit{if}\quad 7 \leq  \varrho  \leq q-3  ;\\
                      (3_{\{1\}},4_{\{2\}},4_{\{3\}},2_{\{4\}}, 2_{\{5\}}, 4_{\{6\}}, \ldots, 4_{\{q-3\}} ,2_{\{q-2\}})&\textit{if}\quad\varrho= q-2;\\
                    (3_{\{1\}},4_{\{2\}},4_{\{3\}},2_{\{4\}}, 2_{\{5\}}, 4_{\{6\}},\ldots,4_{\{q-3\}} ,4_{\{q-2\}})&\textit{if}\quad\varrho= q-1.\\
                    \end{cases}
                    \end{equation*}
                 \noindent The metric coordinates for the vertices 
                     $\{c^\nu_\varrho:3\leq \nu\leq \frac{p-1}{2}$  $\&$  $1\leq \varrho\leq q-1 \}$ are given below
                        \begin{equation*}
                        \delta(c^{\nu}_{\varrho}|E)=
                   \begin{cases}
                   (1_{\{1\}},4_{\{2\}},\ldots, 4_{\{2\nu-1\}},2_{\{2\nu\}},2_{\{2\nu+1\}},4_{\{2\nu+2\}},\ldots,4_{\{q-2\}})\\
                   \qquad \qquad \qquad \qquad \qquad \qquad \qquad \qquad \qquad  \qquad \qquad\qquad\qquad\quad\textit{if}\quad\varrho= 1;\\
                     (3_{\{1\}},2_{\{ 2 \}},4_{\{3\}}, \ldots, 4_{\{2\nu-1\}},2_{\{2\nu\}},2_{\{2\nu+1\}},4_{\{2\nu+2\}},\ldots,4_{\{q-2\}})\\
                     \qquad \qquad \qquad \qquad \qquad \qquad \qquad \qquad \qquad  \qquad \qquad\qquad\qquad\quad\textit{if}\quad \varrho= 2;\\
                   (3_{\{1\}},4_{\{2\}}, \ldots,4_{\{\varrho-1 \}},2_{\{ \varrho \}},4_{\{\varrho +1\}}, \ldots, 4_{\{2\nu-1\}},2_{\{2\nu\}},2_{\{2\nu+1\}},4_{\{2\nu+2\}},\ldots,4_{\{q-2\}})\\
                   \qquad \qquad \qquad \qquad \qquad \qquad \qquad \qquad \qquad  \qquad \qquad\qquad\qquad\quad\textit{if}\quad  3 \leq  \varrho  \leq 2\nu-2  ;\\
                      (3_{\{1\}},4_{\{2\}}, \ldots,4_{\{2\nu-2 \}},2_{\{ 2\nu-1 \}},2_{\{2\nu\}},2_{\{2\nu+1\}},4_{\{2\nu+2\}},\ldots,4_{\{q-2\}})\\
                      \qquad \qquad \qquad \qquad \qquad \qquad \qquad \qquad \qquad  \qquad \qquad\qquad\qquad\quad\textit{if}\quad\varrho= 2\nu-1 ;\\
                     (3_{\{1\}},4_{\{2\}}, \ldots,4_{\{2\nu-1\}},2_{\{2\nu\}},2_{\{2\nu+1\}},2_{\{ 2\nu+2 \}}, 4_{\{2\nu+3 \}},\ldots,4_{\{q-2\}})\\
                     \qquad \qquad \qquad \qquad \qquad \qquad \qquad \qquad \qquad  \qquad \qquad\qquad\qquad\quad\textit{if}\quad  \varrho= 2\nu+2 ;\\
                    (3_{\{1\}},4_{\{2\}}, \ldots,4_{\{2\nu-1\}},2_{\{2\nu\}},2_{\{2\nu+1\}},4_{\{2\nu+2\}},\ldots,4_{\{\varrho-1 \}},2_{\{ \varrho \}},4_{\{\varrho +1\}},\ldots,4_{\{q-2\}})\\
                    \qquad \qquad \qquad \qquad \qquad \qquad \qquad \qquad \qquad  \qquad \qquad\qquad\qquad\quad\textit{if}\quad  2\nu+3 \leq  \varrho  \leq q-3  ;\\
                     (3_{\{1\}},4_{\{2\}}, \ldots,4_{\{2\nu-1\}},2_{\{2\nu\}},2_{\{2\nu+1\}},4_{\{2\nu+2\}},\ldots,4_{\{q-3 \}},2_{\{ q-2 \}})\\
                     \qquad \qquad \qquad \qquad \qquad \qquad \qquad \qquad \qquad  \qquad \qquad\qquad\qquad\quad\textit{if}\quad  \varrho=  q-2  ;\\
                    (3_{\{1\}},4_{\{2\}}, \ldots,,4_{\{2\nu-1\}},2_{\{2\nu\}},2_{\{2\nu+1\}},4_{\{2\nu+2\}},\ldots,4_{\{q-2\}})\\
                    \qquad \qquad \qquad \qquad \qquad \qquad \qquad \qquad \qquad  \qquad \qquad\qquad\qquad\quad\textit{if}\quad\varrho= q-1.\\
                    \end{cases}
                    \end{equation*}
                    \noindent The metric coordinates for the vertices 
                     $\{q_\nu:1\leq \nu\leq {p-1} \}$ are given below
                        \begin{equation*}
                        \delta(q_{\nu}|E)=
                   \begin{cases}
                   (2_{\{1\}},1_{\{2\}}, 1_{\{3\}}, 3_{\{4\}},\ldots,3_{\{q-2\}})&\textit{if}\quad  \nu=1 ;\\
                   (2_{\{1\}},3_{\{2\}}, \ldots,  3_{\{2\nu-1\}},1_{\{2\nu\}},1_{\{2\nu+1\}},3_{\{2\nu+2\}},\ldots,3_{\{q-2\}})&\textit{if}\quad  2 \leq \nu  \leq p-3 ;\\
                     (2_{\{1\}},3_{\{2\}}, \ldots, 3_{\{2\nu-1\}},1_{\{2\nu\}},1_{\{2\nu+1\}},\ldots,1_{\{q-2\}})&\textit{if}\quad  \nu= p-2 ;\\
                     (2_{\{1\}},3_{\{2\}}, \ldots, 3_{\{q-2\}})&\textit{if}\quad  \nu= p-1.\\
                     \end{cases}
                     \end{equation*}

 \noindent Based on the above listed metric coordinates, all vertices of $BS(\Gamma(R))$ have unique metric coordinates with respect to set $E$. It follows that $dim(BS(\Gamma(R)))\leq q-2$.
We need to show that \( \text{dim}(BS(\Gamma(R))) \geq q-2 \). To do this, we assume that \( \text{dim}(BS(\Gamma(R))) \leq q-3 \). However, this assumption leads to a contradiction, which can be illustrated below.\\

\noindent Let $F$ be a set, which contains any $q-3$ vertices of $D$ (where $D= A\cup B \cup C  \cup Q $). Since set $A$ has $q-1$ vertices, and satisfies the condition $N[a_r]\cap  N[a_s]=\emptyset $ (where $1\leq r, s \leq q-1$, $r\neq s$), there will exist at least two vertices in $A$, denoted as $a_u $, and $a_v $ (where $1\leq u, v \leq q-1, $ $u \neq v$), and those two vertices not adjacent to any vertex of $F$, and also not belonging to set $F$. From Remark \ref{22} those two vertices $a_u $ and $a_v $ have the same metric coordinates with respect to set $F$, i.e.,      
 $\delta(a_{u}|F) =\delta(a_{v}|F)$.
 This indicates that $dim(BS(\Gamma(R)))\geq q-2$. Thus $dim(BS(\Gamma(R)))= q-2.$ 
\end{proof}
\begin{remark}\label{22}
  From Figure \ref{fig:4}, it is evident that, if $d(a_i,x)\neq 1$ and $d(a_j,x) \neq 1$, where $1\leq i, j \leq q-1$, $i\neq j$, and $x\in V(BS(\Gamma(\mathbb{Z}_{pq})))\backslash\{a_i,a_j\}$, then $d(a_i,x)=d(a_j,x)$.     
 \end{remark}
 \begin{remark} \label{last} \quad \\
    \begin{itemize}
        \item From Figure \ref{fig:4}, it is clear that among the \( p-1 \) vertices of the set \( Q \), exactly \( \frac{p-1}{2} \) vertices are adjacent only to the vertices of the set \( B \), and not to any vertices in other sets. Similarly, exactly \( \frac{p-1}{2} \) vertices are adjacent only to vertices of the set \( C \), and not to any vertices in other sets.
    \item From Figure \ref{fig:4}, it is evident that, if $d(q_i,a)\neq 1$ and $d(q_j,a) \neq 1$, where $1\leq i, j \leq p-1$, $i\neq j$, and $a\in V(BS(\Gamma(\mathbb{Z}_{pq})))\backslash\{q_i,q_j\}$, then $d(q_i,a)=d(q_j,a)$.\\     
\end{itemize}
\end{remark}

\begin{theorem}
Let $p$ and $q$ be two distinct odd primes, where $p\geq 5$ and $p+1<q<2p-1$. Suppose $R=\mathbb{Z}_n$, where $n=pq$, then
$dim(BS(\Gamma(R))) > q-2$. 
 \end{theorem}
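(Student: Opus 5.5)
The plan is to work with an explicit combinatorial model of $BS(\Gamma(\mathbb{Z}_{pq}))$ rather than with the figure. Since $\Gamma(\mathbb{Z}_{pq})\cong K_{p-1,q-1}$, the subdivision has three kinds of vertices: the ``row'' vertices $a_1,\ldots,a_{q-1}$ (the set $A$), the ``column'' vertices $q_1,\ldots,q_{p-1}$ (the set $Q$), and one subdivision vertex $s_{ik}$ for each pair $(i,k)$, adjacent exactly to $a_i$ and $q_k$. In the paper's notation the $s_{ik}$ are the vertices of $B\cup C$.

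\textbf{Distance table.} All distances can be read off directly:
\begin{itemize}
\item $d(a_i,a_j)=4$ and $d(q_k,q_l)=4$ for $i\neq j$, $k\neq l$, while $d(a_i,q_k)=2$;
\item $d(a_j,s_{ik})$ is $1$ if $j=i$ and $3$ otherwise, and symmetrically for $d(q_l,s_{ik})$;
\item $d(s_{ik},s_{jl})$ is $0$, $2$ or $4$ according as the two cells coincide, share exactly one index, or share none.
\end{itemize}
Two consequences follow. First, by bipartiteness and parity, a vertex of $A\cup Q$ is never confused with a subdivision vertex. Second, every element $w$ of a candidate resolving set $W$ ``touches'' at most one row and at most one column: $a_i$ touches row $i$, $q_k$ touches column $k$, and $s_{ik}$ touches both.

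\textbf{Lower bound $q-2$.} This is exactly Remark~\ref{22}: at most one row can be untouched, so $|W|\ge q-2$.

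\textbf{Sharpening to $q-1$.} To get the strict inequality of the theorem, I would suppose $|W|=q-2$. Then each element of $W$ touches a distinct row, and there is exactly one untouched row $i_0$. For each column $k$ let $T_k\subseteq W$ be the set of elements touching column $k$. I would then list the necessary conditions for $W$ to resolve:
\begin{itemize}
\item the $q_k$ must be separated, so at most one $T_k$ is empty;
\item the cells $s_{i_0k}$ must be separated, which gives the same requirement;
\item within a row $i$ touched only by $w_i$, the cells $s_{ik}$ must be separated by $W\setminus\{w_i\}$. This forbids two columns $k,k'$, neither equal to the column of $w_i$, with $T_k,T_{k'}\subseteq\{w_i\}$.
\end{itemize}
The intended use of the hypothesis $p+1<q<2p-1$ is to show that these conditions cannot all hold with only $q-2$ elements. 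The count would be: $q-2$ elements, each covering at most one of the $p-1$ columns, while $q-2<2p-3$.

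\textbf{Main obstacle.} This last counting step is where I expect the difficulty, and where I would first test small cases. The natural extremal candidate already seems to satisfy every condition above. Take $W$ to consist of $q-2$ cells in distinct rows whose columns cover all $p-1$ columns; this is possible because $q-2\ge p-1$. For $p=5$, $q=7$, take the cells $(1,1),(2,2),(3,3),(4,4),(5,1)$.

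Running through the distance table for this $W$, every pair of vertices appears to be separated:
\begin{itemize}
\item rows are separated by their distinct touching cells;
\item columns are all touched, so the $q_k$ are separated;
\item cells in the same row are separated by column touches coming from other rows;
\item cells in the untouched row are separated from those in touched rows by the distances $2$ versus $4$.
\end{itemize}
So before writing a proof I would carefully recheck this example against the table. If it survives, the stated strict inequality is false in this range, and the theorem would need to be corrected to $dim(BS(\Gamma(R)))=q-2$. Only if a specific unresolved pair is found there would I extract from it the general counting contradiction sketched above.
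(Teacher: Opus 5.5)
\textbf{Verdict: there is a genuine gap.} Your distance table and the bound $|W|\ge q-2$ are correct. However, the proposal does not prove the statement, and the ``counterexample'' that makes you doubt it is not a resolving set. In $BS(K_{6,4})$, take $W=(s_{11},s_{22},s_{33},s_{44},s_{51})$ in the order listed. Then
\[
\delta(a_2|W)=\delta(q_2|W)=(3,1,3,3,3)\quad\text{and}\quad \delta(s_{23}|W)=\delta(s_{32}|W)=(4,2,2,4,4),
\]
and the same collision happens for $a_3,q_3$ and for $a_4,q_4$. Your checklist misses exactly these two kinds of pairs.

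\emph{First missed pair: $a_i$ versus $q_l$.} Parity separates $A\cup Q$ from the cells, but it does not separate $a_i$ from $q_l$. A cell $s_{jk}$ is equidistant from $a_i$ and $q_l$ whenever $(j=i)\Leftrightarrow(k=l)$, and in particular $s_{il}$ itself is. Only an element of $W\cap(A\cup Q)$, or a cell sharing exactly one index with $(i,l)$, separates them.

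\emph{Second missed pair: crossed cells.} For $i\ne j$ and $k\ne l$, both $s_{il}$ and $s_{jk}$ are at distance $2$ from $s_{ik}$ and from $s_{jl}$. So two cells in different touched rows need not be separated.

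These two collisions are what your counting step was missing. They are also exactly the contradictions $\delta(a_i|W)=\delta(q_j|W)$ and $\delta(b^{\nu_2}_i|W)=\delta(b^{\nu_1}_j|W)$ that drive the paper's case analysis.

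\emph{Completing the count.} With $|W|=q-2$, you already have that each element touches a distinct row. Hence $W\cap Q=\emptyset$ and there is one untouched row $i_0$. Let $n_k$ be the number of cells of $W$ in column $k$. Then:
\begin{enumerate}
\item At most one $n_k$ equals $0$.
\item At most one $n_k$ equals $1$. If $s_{il}$ and $s_{jk}$ are each alone in their columns, then $s_{ik}$ and $s_{jl}$ collide.
\item An empty column $k$ and a singleton column $\{s_{il}\}$ cannot coexist, since then $s_{ik}$ and $s_{i_0l}$ collide.
\item If $W\cap A=\emptyset$, then no column has $n_k\le 1$. A singleton $\{s_{il}\}$ makes $a_i$ and $q_l$ collide, and an empty column $k$ makes $a_{i_0}$ and $q_k$ collide.
\end{enumerate}
So every column but at most one must carry two cells of $W$. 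This gives either $W\cap A=\emptyset$ and $q-2\ge 2(p-1)$, or $|W\cap A|\ge 1$ and $q-2\ge 1+2(p-2)=2p-3$. Since $q$ is odd, $q<2p-1$ forces $q\le 2p-3$, which contradicts both cases.

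So the strict inequality in the statement is true. The threshold $2p-3$ you anticipated is the right one, but it comes from columns needing double coverage, not single coverage as in your count.
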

    \begin{proof}
\noindent Consider the ring $R = \mathbb{Z}_{pq}$, where $p \geq 5$ and $p + 1 < q < 2p - 1$. Let $\Gamma(R)$ represent the zero divisor graph of this ring. We denote $BS(\Gamma(R))$ as the barycentric subdivision of $\Gamma(R)$. The graph $BS(\Gamma(R))$ consists of $pq-1$ vertices, and $2(p-1)(q-1)$ edges. In particular, we partitioned the vertex set of $BS(\Gamma(R))$ into 4 sets namely $A=\{a_1,a_2,\ldots, a_{q-1}\}$,  $B=\{b^\nu_i\}$, $C=\{c^\nu_i\}$, where $1\leq \nu\leq \frac{p-1}{2}$, and $1\leq i\leq {q-1}$ ($B^\nu=\{b^\nu_1,b^\nu_2,\ldots,b^\nu_{q-1}\}$, $C^\nu=\{c^\nu_1,c^\nu_2,\ldots,c^\nu_{q-1}\}$, where $1\leq \nu \leq \frac{p-1}{2}$ are the subsets of $B$ and $C$, respectively), and $Q=\{q_1,q_2, \ldots, q_{p-1}\}$. These sets are illustrated in Figure \ref{fig:4}. \\

\noindent Now we want to prove that dim($BS(\Gamma(R)) > q-2$ when $p+1< q<2p-1$. Let us assume that $W$ is a resolving set of $BS(\Gamma(R))$, and $|W|=q-2$. Then we get the following contradictions. \\

\begin{enumerate}
\item If $q-2$ vertices of the set $W$ are chosen from the set $A$, then, according to Remark \ref{last}, all the vertices of $Q$ have the same metric coordinates with respect to $W$, i.e., $\delta(q_{u}|W) =\delta(q_{v}|W)$, where $q_{u},q_{v} \in Q.$
\item If $q-2$ vertices of the set $W$ are chosen from $B$ (or $C$), then, according to Remark \ref{last}, there will exist at least two vertices of $Q$ that are not adjacent to any vertices of $W$, say $q_{u}$ and $q_{v}$, and those two have the same metric coordinates with respect to $W$ i.e., $\delta(q_{u}|W) =\delta(q_{v}|W)$.
\item If $q-2$ vertices of $W$ are chosen from  $A$ and $B$ (selecting at least one vertex from each set), then
from Remark \ref{last}, there will exist at least two vertices of $Q$ that are adjacent only to vertices of $C$, say $q_{u}$ and $q_{v}$, and those two have same metric coordinates, i.e., $\delta(q_{u}|W) =\delta(q_{v}|W)$.
\item If $q-2$ vertices of $W$ are chosen from $A$ and $C$, then
from Remark \ref{last}, there will exist at least two vertices of $Q$ that are adjacent only to vertices of $B$, say $q_{u}$ and $q_{v}$, and those two have same metric coordinates, i.e., $\delta(q_{u}|W) =\delta(q_{v}|W)$.
\item  \label{item1} If $q-2$ vertices of $W$ are chosen from \( A \) and \( Q \); then, there will exist at least two vertices in \( A \), denoted as \( a_u \) and \( a_v \), where \( 1 \leq u, v\leq q-1\) and \(u\neq v \), and these two vertices are not adjacent to any vertex of  \( W \) and do not belong to set \( W \). This is because from the fact that \( |A| = q-1 \), and the neighbourhoods of distinct vertices in \( A \) are disjoint, i.e., \( N[a_r] \cap N[a_s] = \emptyset \) for \( 1 \leq r, s \leq q-1 \) with \( r \neq s \), and also one vertex is choosen from \( Q \), and all vertices in \( A \) are equidistant from every vertex in \( Q \), it follows (as stated in Remark \ref{22}) that  \( a_u \) and \( a_v \) have the same metric coordinates with respect to \( W \), i.e., $\delta(a_u | W) = \delta(a_v | W)$.
\item If $q-2$ vertices of $W$ are chosen from  $B$ and $C$, then we have the following cases: 
\begin{itemize}
    \item If at least two sets, say \(B^{\nu_1}\) and \(B^{\nu_2}\) (or \(C^{\nu_1}\) and \(C^{\nu_2}\), or \(B^{\nu_1}\) and \(C^{\nu_3}\), where \(1 \leq \nu_1, \nu_2, \nu_3 \leq \frac{p-1}{2}\), and \(\nu_1 \neq \nu_2\)), have exactly one vertex of \(W\), while the remaining sets ($B^{\nu_3}$, $C^{\nu_4}$, where \(1\leq \nu_4, \nu_3 \leq \frac{p-1}{2}\), and $\nu_1 \neq \nu_3 \neq \nu_2$) have either more than one vertex or do not have any vertices of \(W\), then we encounter the following contradictions:
    
(without loss of generality, consider the sets \(B^{\nu_1}\) and \(B^{\nu_2}\) as having exactly one vertex of \(W\), say \(b^{\nu_1}_i\) and \(b^{\nu_2}_j\), respectively, where \(1 \leq i, j \leq q-1\).)
    \begin{enumerate}
        \item If each $a_k$ ($1\leq k \leq q-1$) adjacent to at most one vertex of $W$, then $\delta( b^{\nu_2}_i|W) =\delta( b^{\nu_1}_{j}|W)$.
\item If at least one vertex in $A$, say $a_k$ ($1\leq k \leq q-1$) is adjacent to more than one vertex of $W$, then there will exist at least two vertices $a_u$ and $a_v$ in $A$, where $1 \leq u \neq v \leq q-1$, that are not adjacent to any vertex of $W$, and do not belong to set $W$. This is because $|A|=q-1$, and $N[a_r]\cap N[a_s]=\emptyset$, where $1\leq r,s\leq q-1$, $r\neq s$. According to Remark \ref{22}, these two vertices $a_u$ and $a_v$ have the same metric coordinates with respect to $W$, i.e., $\delta(a_u|W) = \delta(a_v|W)$.
 \end{enumerate}
    \item If only one set, say $B^{\nu_1}$ (or $C^{\nu_2}$), where $1 \leq \nu_1, \nu_2 \leq \frac{p-1}{2}$, has one vertex of $W$, say $b^{\nu_1}_i$, while the remaining sets have either more than one vertex of $W$ or do not have any vertices of $W$, then we encounter the following contradictions:
    \begin{enumerate}
        \item If each $a_k$ ($1\leq k \leq q-1$)  adjacent to at most one vertex of $W$, then $\delta(a_i|W) = \delta(q_j|W)$, where $q_j$ and $a_i$ is adjacent to $b^{\nu_1}_i$.
        \item If at least one vertex in $A$, say $a_k$ ($1\leq k \leq q-1$) is adjacent to more than one vertex of $W$, then there must be at least two vertices $a_u$ and $a_v$ in set $A$ (where $1 \leq u \neq v \leq q-1$) that are not adjacent to any vertex of $W$ and do not belong to set $W$. This is because $|A|=q-1$, and $N[a_r]\cap N[a_s]=\emptyset$, where $1\leq r,s\leq q-1$, $r\neq s$. According to Remark \ref{22}, these two vertices $a_u$ and $a_v$ have the same metric coordinates with respect to $W$, i.e., $\delta(a_u|W) = \delta(a_v|W)$.
        \end{enumerate}
     \item If all the sets $B^{\nu_1}$ and $C^{\nu_2}$ (where $1\leq \nu_1,\nu_2 \leq \frac{p-1}{2})$ either contain more than one vertex of  $W$ or do not contain any vertex of $W$, then there exist at least two sets do not contain any vertex of $W$, which we will denote by $B^{\nu_3}$ and $B^{\nu_4}$. This is because $W$ has $q-2$ vertices, and $q-2<2p-4$. Therefore, From Remark \ref{last} $\delta(q_i|W) = \delta(q_j|W)$, where $q_i$ and $q_j$ are adjacent to vertices of $B^{\nu_3}$ and $B^{\nu_4}$ respectively.  
   \end{itemize}
\item If $q-2$ vertices of $W$ are chosen from $B$ and $Q$ (selecting at least one vertex from each set), then $\delta(a_i|W) = \delta(a_j|W)$, where $a_i$ and $a_j$ are not adjacent to any vertices of $W$. The reason is the same as in the case \ref{item1}.
 \item If $q-2$ vertices of $W$ are chosen from $C$ and $Q$ (selecting at least one vertex from each set), then $\delta(a_i|W) = \delta(a_j|W)$, where $a_i$ and $a_j$ are not adjacent to any vertices of $W$. The reason is the same as in the previous case \ref{item1}.
 \item If $q-2$ vertices of $W$ are chosen from $A$, $B$, and $C$, then we have the following cases
 \begin{itemize}
     \item  If we choose at least two vertices from $A$, say $a_{k_1}$ and $a_{k_2}$, then
     \begin{enumerate}
         \item If at least two sets, say $B^{\nu_1}$ and $B^{\nu_2}$ (or $C^{\nu_1}$ and $C^{\nu_2}$, or $B^{\nu_1}$ and $C^{\nu_3}$), where $1\leq \nu_1, \nu_2, \nu_3 \leq \frac{p-1}{2}, \nu_1\neq \nu_2$, have exactly one vertex of $W$, say $b^{\nu_1}_i$ and $b^{\nu_2}_j$ ($1\leq i,j \leq q-1)$, respectively and remaining sets have more than one or do not have any vertices of $W$, then we encounter the following contradiction: 
         \begin{enumerate}
        \item If each $a_k$ ($1\leq k \leq q-1$) is adjacent to at most one vertex of $W$ except $a_{k_1}$ and $a_{k_2}$, and $a_{k_1}$ and $a_{k_2}$ not adjacent to any vertex of $W$, then $\delta( b^{\nu_2}_i|W) =\delta( b^{\nu_1}_{j}|W)$.
       \item If at least one vertex in $A$, say $a_k$ ($1\leq k \leq q-1$) is adjacent to more than one vertex of $W$, or If at least one vertex in $A$ say $a_k$ ($1\leq k \leq q-1$) is belonging to set $W$, and adjacent to at least one vertex of $W$, then there will exist at least two vertices in $A$ , say $a_u $ and $a_v $, where $1\leq u \neq v \leq q-1$, not adjacent to any vertex of $W$, and also not belonging to set $W$. This is because $|A|=q-1$, and $N[a_r]\cap N[a_s]=\emptyset$, where $1\leq r,s\leq q-1$, $r\neq s$. From Remark \ref{22} those two vertices $a_u $ and $a_v $ have same metric coordinates with respect to $W$, i.e.,      
 $\delta(a_{u}|W) =\delta(a_{v}|W)$.
    \end{enumerate}
    \item \label{item 9} If only one set, say \( B^{\nu_1} \) (or \( C^{\nu_2} \)), where \( 1 \leq \nu_1, \nu_2 \leq \frac{p-1}{2} \), contains one vertex of  \( W \), say \( b^{\nu_1}_i \), while the remaining sets contain either more than one or do not have any vertices of  \( W \), then there exist at least two sets do not contain any vertex of $W$, which we will denote by $B^{\nu_3}$ and $B^{\nu_4}$ (where $\nu_3 \neq \nu_4$). This is because $W$ has $q-2$ vertices, and $q-2<2p-4$, and also 2 vertices of $W$ are chosen in $A$. Therefore, From Remark \ref{last} $\delta(q_i|W) = \delta(q_j|W)$, where $q_i$ and $q_j$ are adjacent to vertices of $B^{\nu_3}$ and $B^{\nu_4}$ respectively.
 \item If all sets $B^{\nu_i}$ and $C^{\nu_j}$ (where $1\leq i,j \leq \frac{p-1}{2})$ either contain more than one vertex of $W$ or do not contain any vertex of $W$, then we arrive at the same contradiction mentioned in the previous case \ref{item 9}. 
   \end{enumerate} 
    \item If we choose 1 vertex from $A$, say $a_{k_1}$ ($1\leq k_1 \leq q-1$), and remaining vertex from $B$ and $C$, then we have the following cases
    \begin{enumerate}
        \item If at least two sets, say $B^{\nu_1}$ and $B^{\nu_2}$ (or $C^{\nu_1}$ and $C^{\nu_2}$, or $B^{\nu_1}$ and $C^{\nu_3}$, where $1\leq \nu_1,\nu_2,\nu_3 \leq \frac{p-1}{2}, \nu_1\neq \nu_2)$), have exactly one vertex of $W$, say $b^{\nu_1}_i$ and $b^{\nu_2}_j$ (where $1\leq i,j \leq \frac{p-1}{2})$, respectively and remaining sets have more than one or do not have any vertices of $W$, then we get the following contradictions:
         \begin{enumerate}
        \item If each $a_k$ ($1\leq k \leq q-1$) is adjacent to at most one vertex of $W$ except $a_{k_1}$, and $a_{k_1}$ is not adjacent to any vertex of $W$, then $\delta( b^{\nu_2}_i|W) =\delta( b^{\nu_1}_{j}|W)$.
        \item If at least one vertex in $A$, say $a_k$ ($1\leq k \leq q-1$) is adjacent to more than one vertex of $W$, or if one vertex in $A$, say $a_{k_1}$ ($1\leq k_1 \leq q-1$) is belonging to set $W$ and adjacent to at least one vertex of $W$, then there must be at least two vertices $a_u$ and $a_v$ in set $A$, where $1 \leq u \neq v \leq q-1$, that are not adjacent to any vertex of $W$ and do not belong to set $W$. This is because $|A|=q-1$, and $N[a_r]\cap N[a_s]=\emptyset$, where $1\leq r,s\leq q-1$, $r\neq s$. According to Remark \ref{22}, these two vertices $a_u$ and $a_v$ have the same metric coordinates with respect to $W$, i.e., $\delta(a_u|W) = \delta(a_v|W)$.
        \end{enumerate}
    \item  If only one set say, $B^{\nu_1}$  (or $C^{\nu_2}$), where $1\leq \nu_1,\nu_2 \leq \frac{p-1}{2}$, having one vertex of $W$, say $b^{\nu_1}_i$ and remaining sets have more than one or do not have any vertices of $W$, then we get the similar contradiction as mentioned in case \ref{item 9}
   \item If all sets $B^{\nu_1}$ and $C^{\nu_2}$ (Where $1\leq \nu_1,\nu_2 \leq \frac{p-1}{2})$ either contain more than one vertex of $W$ or do not contain any vertex of $W$, then we encounter the similar contradiction as mentioned in the case \ref{item 9}.
      \end{enumerate} 
 \end{itemize}
\item  If $q-2$ vertices of $W$ are chosen from $A$, $B$, and $Q$ (or from $A$, $C$, and $Q$, or from $B$, $C$, and $Q$, or from $A$, $B$, $C$, and $Q$ ), in any combination, then we get the same contradiction as mentioned in the case \ref{item1}.
\end{enumerate}
 It has been observed that when $p+1< q < 2p-2$, selecting a set (denoted as $W$) of $q-2$ vertices from $A$, $B$, $C$, and $Q$ in any combination, results in at least two vertices have the same metric coordinates with respect to $W$. This indicates that the MD of  $BS(\Gamma(R))$ is greater than $q-2$ when $p+1< q < 2p-2$.
 \end{proof}
 
\begin{theorem}
Let $p$ and $q$ be two distinct odd primes, where $q \geq 2p-1$. Suppose $R=\mathbb{Z}_n$ when $n=pq$, then,
$idim(BS(\Gamma(R))
)=q-2.$
\begin{proof}
Theorem $\ref{b}$ clearly states that $dim(BS(\Gamma(R)=q-2$, and the metric basis considered in Theorem $\ref{b}$ is an independent set. This concludes the theorem by Definition $\ref{defn1}$ and Definition \ref{defn2}.
\end{proof}
\end{theorem}

            \begin{theorem}\label{z}
Let $p$ and $q$ be two distinct odd primes, where $q>p$ $\&$ $q = 2p-3$. Suppose $R=\mathbb{Z}_n$, where $n=pq$, then,
$dim(BS(\Gamma(R))
)=q-1$.
\begin{proof}
Consider a ring $R = \mathbb{Z}_n$, where $n=pq$, where $q = 2p-3$. Let $\Gamma(R)$ denote the zero divisor graph of $R$. We denote $BS(\Gamma(R))$ as the barycentric subdivision of $\Gamma(R)$. The graph $BS(\Gamma(R))$ consists of $pq-1$ vertices and $2(p-1)(q-1)$ edges. Specifically, the vertex set of $BS(\Gamma(R))$ is partitioned into 4 sets: $A=\{a_1,a_2,\ldots, a_{q-1}\}$,  $B=\{b^\nu_i\}$, $C=\{c^\nu_i\}$, where $1\leq \nu\leq \frac{p-1}{2}$, and $1\leq i\leq {q-1}$ ($B^\nu=\{b^\nu_1,b^\nu_2,\ldots,b^\nu_{q-1}\}$, where $1\leq \nu \leq \frac{p-1}{2}$ are the subsets of $B$, and $C^\nu=\{c^\nu_1,c^\nu_2,\ldots,c^\nu_{q-1}\}$, where $1\leq \nu \leq \frac{p-1}{2}$  are the subsets of $C$), and $Q=\{q_1,q_2, \ldots, q_{p-1}\}$. These sets are illustrated in Figure \ref{fig:4}.\\

 \noindent Let us consider a set $E=\{   a _1, c^1_2,c^1_3,c^2_4,c^2_5,\ldots,c^\frac{p-1}{2}_{p-1},c^\frac{p-1}{2}_{p}, b^1_{p+1}, b^1_{p+2},\ldots, b^\frac{p-5}{2}_{2p-6},b^\frac{p-5}{2}_{2p-5},b^\frac{p-3}{2}_{2p-4},\ldots,b^\frac{p-3}{2}_{q-1}\}$. 
Then, to prove that $E$  is a resolving set of $BS(\Gamma(R))$, we need to assign unique metric coordinates for every vertex of $V(BS(\Gamma(R)))-E$ with respect to set $E$. \\\\
 The metric coordinates for the vertices $\{a_ \varrho:1\leq \noindent \varrho\leq q-1\}$ are given below
 \begin{equation*}
               \delta(a_{\varrho}|E)=
                   \begin{cases}
                   (4_{\{1\}},1_{\{ 2 \}},3_{\{3\}},\ldots,3_{\{q-2\}},3_{\{q-1\}}) &\textit{if}\quad \varrho =2  ;\\
                    (4_{\{1\}},3_{\{2\}},\ldots,3_{\{\varrho -1\}},1_{\{ \varrho \}},3_{\{\varrho +1\}},\ldots,3_{\{q-1\}}) &\textit{if}\quad 3\leq \varrho $ $\leq q-2  ;\\
                    (4_{\{1\}},3_{\{2\}},\ldots, 3_{\{q-2\}}),1_{\{q-1\}}) &\textit{if}\quad  \varrho = q-1  .\\
                    \end{cases}
                     \end{equation*}
                     
                    \noindent The metric coordinates for the vertices 
                     $\{b^\nu_\varrho:1\leq \nu\leq \frac{p-5}{2} $ $ \& $  $1\leq \varrho\leq q-1 \}$ are given below
                        \begin{equation*}
                        \delta(b^{\nu}_{\varrho}|E)=
                   \begin{cases}
                   (1_{\{1\}},4_{\{2\}},\ldots,4_{\{2\nu+p-2\}},2_{\{p+2\nu-1\}},2_{\{p+2\nu\}},4_{\{p+2\nu+1\}},\ldots,4_{\{q-1\}})\\\qquad  \qquad \qquad \qquad \qquad \qquad \qquad \qquad \qquad \qquad  \qquad \qquad\qquad\qquad\quad\textit{if}\quad \varrho= 1;\\
         (3_{\{1\}}, 2_{\{ 2\}},4_{\{3\}},\ldots,4_{\{2\nu+p-2\}},2_{\{p+2\nu-1\}},2_{\{p+2\nu\}},4_{\{p+2\nu+1\}},\ldots,4_{\{q-1\}})\\
         \qquad  \qquad \qquad \qquad \qquad \qquad \qquad \qquad \qquad \qquad  \qquad \qquad\qquad\qquad\quad\textit{if}\quad \varrho=2 ;\\
                     (3_{\{1\}},4_{\{2\}}, \ldots,4_{\{\varrho-1 \}},2_{\{ \varrho \}},4_{\{\varrho +1\}},\ldots,4_{\{2\nu+p-2\}},2_{\{p+2\nu-1\}},2_{\{p+2\nu\}},4_{\{p+2\nu+1\}},\ldots,4_{\{q-1\}})\\
                     \qquad  \qquad \qquad \qquad \qquad \qquad \qquad \qquad \qquad \qquad  \qquad \qquad\qquad\qquad\quad\textit{if}\quad  3 \leq  \varrho  \leq 2\nu+p-3 ;\\
(3_{\{1\}},4_{\{2\}}, \ldots,4_{\{2\nu \}},2_{\{ 2\nu+p-2 \}},2_{\{p+2\nu-1\}},2_{\{p+2\nu\}},4_{\{p+2\nu+1\}},\ldots,4_{\{q-1\}})\\
\qquad  \qquad \qquad \qquad \qquad \qquad \qquad \qquad \qquad \qquad  \qquad \qquad\qquad\qquad\quad\textit{if}\quad   \varrho=2\nu+p-2 ;\\
(3_{\{1\}},4_{\{2\}}, \ldots,4_{\{2\nu+p-2\}},2_{\{p+2\nu-1\}},2_{\{p+2\nu\}},2_{\{ p+2\nu+1 \}},4_{\{ p+2\nu+2 \}},\ldots,4_{\{q-1\}})\\
\qquad  \qquad \qquad \qquad \qquad \qquad \qquad \qquad \qquad \qquad  \qquad \qquad\qquad\qquad\quad\textit{if}\quad  \varrho=p+2\nu+1;\\
                    (3_{\{1\}},4_{\{2\}}, \ldots,4_{\{2\nu+p-2\}},2_{\{p+2\nu-1\}},2_{\{p+2\nu\}},4_{\{p+2\nu+1\}},\ldots,4_{\{\varrho-1 \}},2_{\{ \varrho \}},4_{\{\varrho +1\}},\ldots,4_{\{q-1\}})\\
\quad \qquad \qquad \qquad \qquad \qquad \qquad \qquad \qquad \qquad  \qquad \qquad\qquad\qquad\quad\textit{if}\quad p+2\nu+2 \leq  \varrho  \leq q-2;\\
                     (3_{\{1\}},4_{\{2\}}, \ldots,4_{\{2\nu+p-2\}},2_{\{p+2\nu-1\}},2_{\{p+2\nu\}},4_{\{p+2\nu+1\}},\ldots, 4_{\{q-2\}}, 2_{\{q-1\}})\\
                     \qquad  \qquad \qquad \qquad \qquad \qquad \qquad \qquad \qquad \qquad  \qquad \qquad\qquad\qquad\quad\textit{if}\quad  \varrho = q-1.\\
                     \end{cases}
                    \end{equation*}
                    \noindent The metric coordinates for the vertices 
                     $\{b^\nu_\varrho: \nu =\frac{p-3}{2} $ $\&$  $1\leq \varrho\leq q-1 \}$ are given below
                        \begin{equation*}
                        \delta(b^{\nu}_{\varrho}|E)=
                   \begin{cases}
                   (1_{\{1\}},4_{\{2\}}, \ldots, 4_{\{2p-5 \}},2_{\{2p-4 \}}, \ldots,2_{\{q-3\}},2_{\{q-1\}})\\
\qquad  \qquad \qquad \qquad \qquad \qquad \qquad \qquad \qquad \qquad  \qquad \qquad\qquad\qquad\quad\textit{if}\quad \varrho= 1;\\
                   (3_{\{1\}},2_{\{ 2\}},4_{\{3\}}, \ldots, 4_{\{2p-5 \}},2_{\{2p-4 \}}, \ldots,2_{\{q-3\}},2_{\{q-1\}})\\
\qquad  \qquad \qquad \qquad \qquad \qquad \qquad \qquad \qquad \qquad  \qquad \qquad\qquad\qquad\quad\textit{if}\quad    \varrho =2  ;\\
                   (3_{\{1\}},4_{\{2\}}, \ldots,4_{\{\varrho-1\}},2_{\{ \varrho\}},4_{\{\varrho +1\}}, \ldots, 4_{\{2p-5 \}},2_{\{2p-4 \}}, \ldots,2_{\{q-3\}},2_{\{q-1\}})\\
\qquad  \qquad \qquad \qquad \qquad \qquad \qquad \qquad \qquad \qquad  \qquad \qquad\qquad\qquad\quad\textit{if}\quad  3 \leq  \varrho  \leq 2p-6  ;\\
                   3_{\{1\}},4_{\{2\}}, \ldots,4_{\{2p-6\}},2_{\{ 2p-5\}},2_{\{2p-4 \}}, \ldots,2_{\{q-3\}},2_{\{q-1\}})\\
\qquad  \qquad \qquad \qquad \qquad \qquad \qquad \qquad \qquad \qquad  \qquad \qquad\qquad\qquad\quad\textit{if}\quad  \varrho = 2p-5.\\
                    \end{cases}
                    \end{equation*}
\noindent The metric coordinates for the vertices 
                     $\{b^\nu_\varrho: \nu =\frac{p-1}{2} $ $\&$  $1\leq \varrho\leq q-1 \}$ are given below
                        \begin{equation*}
                        \delta(b^{\nu}_{\varrho}|E)=
                   \begin{cases}
                   (1_{\{1\}},4_{\{2\}},\ldots, 4_{\{2p-3 \}},4_{\{2p-2 \}}, \ldots,4_{\{q-3\}},4_{\{q-1\}})\\
\qquad  \qquad \qquad \qquad \qquad \qquad \qquad \qquad \qquad \qquad  \qquad \qquad\qquad\qquad\quad\textit{if}\quad\varrho= 1;\\
                   (3_{\{1\}},2_{\{ 2\}},4_{\{3\}}, \ldots,4_{\{q-1\}})\\
\qquad  \qquad \qquad \qquad \qquad \qquad \qquad \qquad \qquad \qquad  \qquad \qquad\qquad\qquad\quad\textit{if}\quad    \varrho =2;\\
                   (3_{\{1\}},4_{\{2\}}, \ldots,4_{\{\varrho-1\}},2_{\{ \varrho\}},4_{\{\varrho +1\}}, \ldots,4_{\{q-1\}})\\
\qquad  \qquad \qquad \qquad \qquad \qquad \qquad \qquad \qquad \qquad  \qquad \qquad\qquad\qquad\quad\textit{if}\quad  3 \leq  \varrho  \leq q-2  ;\\
                      (3_{\{1\}},4_{\{2\}},\ldots,4_{\{q-2\}},2_{\{q-1\}})\\
\qquad  \qquad \qquad \qquad \qquad \qquad \qquad \qquad \qquad \qquad  \qquad \qquad\qquad\qquad\quad\textit{if}\quad   \varrho= q-1  .\\
                    \end{cases}
                    \end{equation*}

\noindent The metric coordinates for the vertices 
                     $\{c^1_\varrho: 1\leq \varrho\leq q-1 \}$ are given below
                        \begin{equation*}
                        \delta(c^{1}_{\varrho}|E)=
                   \begin{cases}
                   (1_{\{1\}},2_{\{2\}},2_{\{3\}},4_{\{4\}},\ldots,4_{\{q-1\}})&\textit{if}\quad\varrho= 1;\\
                     (3_{\{1\}},2_{\{2\}}, 2_{\{3 \}},2_{\{ 4 \}}, 4_{\{ 5 \}},\ldots,4_{\{q-1\}})&\textit{if}\quad \varrho= 4;\\
                     (3_{\{1\}},2_{\{2\}}, 2_{\{3 \}},4_{\{ 4 \}}, \ldots, 4_{\{\varrho-1 \}},2_{\{\varrho \}}, 4_{\{\varrho+1 \}},\ldots,4_{\{q-1\}})&\textit{if}\quad  5 \leq  \varrho  \leq q-2  ;\\
                     (3_{\{1\}},2_{\{2\}},2_{\{3\}} 4_{\{4\}},\ldots, 4_{\{q-2\}},2_{\{q-1\}}) &\textit{if}\quad\varrho= q-1.\\
                \end{cases}
                    \end{equation*}

                    \noindent The metric coordinates for the vertices 
                     $\{c^2_\varrho: 1\leq \varrho\leq q-1 \}$ are given below
                        \begin{equation*}
                        \delta(c^{2}_{\varrho}|E)=
                   \begin{cases}
                   (1_{\{1\}},4_{\{2\}},4_{\{3\}},2_{\{4\}}, 2_{\{5\}}, 4_{\{6\}}, \ldots,4_{\{q-1\}})&\textit{if}\quad\varrho= 1;\\
                   (3_{\{1\}},2_{\{2\}},4_{\{3\}},2_{\{4\}}, 2_{\{5\}}, 4_{\{6\}}, \ldots,4_{\{q-1\}})&\textit{if}\quad\varrho= 2;\\
                     (3_{\{1\}},4_{\{2\}},2_{\{3\}},2_{\{4\}}, 2_{\{5\}}, 4_{\{6\}}, \ldots,4_{\{q-1\}})&\textit{if}\quad\varrho= 3;\\
                     (3_{\{1\}},4_{\{2\}},4_{\{3\}},2_{\{4\}}, 2_{\{5\}}, 2_{\{6\}}, 4_{\{7\}}  \ldots,4_{\{q-1\}})&\textit{if}\quad\varrho= 6;\\
                     
                   (3_{\{1\}},4_{\{2\}}, 4_{\{3 \}},2_{\{ 4 \}}, 2_{\{ 5 \}}, 4_{\{ 6 \}}, \ldots, 4_{\{\varrho-1 \}}2_{\{\varrho \}}, 4_{\{\varrho+1 \}},\ldots,4_{\{q-1\}})&\textit{if}\quad  7 \leq  \varrho  \leq q-2  ;\\
                     (3_{\{1\}},4_{\{2\}},4_{\{3\}},2_{\{4\}}, 2_{\{5\}}, 4_{\{6\}}, \ldots 4_{\{q-2\}} ,2_{\{q-1\}})&\textit{if}\quad\varrho= q-1.\\
                    \end{cases}
                    \end{equation*}
                    
                     \noindent The metric coordinates for the vertices 
                     $\{c^\nu_\varrho:3\leq \nu\leq \frac{p-1}{2}$  $\&$  $1\leq \varrho\leq q-1 \}$ are given below
                        \begin{equation*}
                        \delta(c^{\nu}_{\varrho}|E)=
                   \begin{cases}
                   (1_{\{1\}},4_{\{2\}},\ldots, 4_{\{2\nu-1\}},2_{\{2\nu\}},2_{\{2\nu+1\}},4_{\{2\nu+2\}},\ldots,4_{\{q-1\}})\\
\qquad  \qquad \qquad \qquad \qquad \qquad \qquad \qquad \qquad \qquad  \qquad \qquad\qquad\qquad\quad \textit{if}\quad \varrho= 1;\\
                     (3_{\{1\}},2_{\{ 2 \}},4_{\{3\}}, \ldots, 4_{\{2\nu-1\}},2_{\{2\nu\}},2_{\{2\nu+1\}},4_{\{2\nu+2\}},\ldots,4_{\{q-1\}})\\
\qquad  \qquad \qquad \qquad \qquad \qquad \qquad \qquad \qquad \qquad  \qquad \qquad\qquad\qquad\quad \textit{if}\quad\varrho= 2;\\
                   (3_{\{1\}},4_{\{2\}}, \ldots,4_{\{\varrho-1 \}},2_{\{ \varrho \}},4_{\{\varrho +1\}}, \ldots, 4_{\{2\nu-1\}},2_{\{2\nu\}},2_{\{2\nu+1\}},4_{\{2\nu+2\}},\ldots,4_{\{q-1\}})\\
                   \qquad  \qquad \qquad \qquad \qquad \qquad \qquad \qquad \qquad \qquad  \qquad \qquad\qquad\qquad\quad \textit{if}\quad  3 \leq  \varrho  \leq 2\nu-2  ;\\
                      (3_{\{1\}},4_{\{2\}}, \ldots,4_{\{2\nu-2 \}},2_{\{ 2\nu-1 \}},2_{\{2\nu\}},2_{\{2\nu+1\}},4_{\{2\nu+2\}},\ldots,4_{\{q-1\}})\\
\qquad  \qquad \qquad \qquad \qquad \qquad \qquad \qquad \qquad \qquad  \qquad \qquad\qquad\qquad\quad\textit{if}\quad \varrho= 2\nu-1 ;\\
                     (3_{\{1\}},4_{\{2\}}, \ldots,4_{\{2\nu-1\}},2_{\{2\nu\}},2_{\{2\nu+1\}},2_{\{ 2\nu+2 \}}, 4_{\{2\nu+3 \}},\ldots,4_{\{q-1\}})\\
\qquad  \qquad \qquad \qquad \qquad \qquad \qquad \qquad \qquad \qquad  \qquad \qquad\qquad\qquad\quad \textit{if}\quad \varrho= 2\nu+2 ;\\
                    (3_{\{1\}},4_{\{2\}}, \ldots,4_{\{2\nu-1\}},2_{\{2\nu\}},2_{\{2\nu+1\}},4_{\{2\nu+2\}},\ldots,4_{\{\varrho-1 \}},2_{\{ \varrho \}},4_{\{\varrho +1\}},\ldots,4_{\{q-1\}})\\
\qquad  \qquad \qquad \qquad \qquad \qquad \qquad \qquad \qquad \qquad  \qquad \qquad\qquad\qquad\quad \textit{if}\quad 2\nu+3 \leq  \varrho  \leq q-2  ;\\
                    (3_{\{1\}},4_{\{2\}}, \ldots,4_{\{2\nu-1\}},2_{\{2\nu\}},2_{\{2\nu+1\}},4_{\{2\nu+2\}},\ldots, 4_{\{q-2\}}),2_{\{q-1\}})\\\qquad  \qquad \qquad \qquad \qquad \qquad \qquad \qquad \qquad \qquad  \qquad \qquad\qquad\qquad\quad \textit{if}\quad\varrho= q-1.\\
                    \end{cases}
                    \end{equation*}
                    
                \noindent The metric coordinates for the vertices 
                     $\{q_\nu:1\leq \nu\leq {p-1} \}$ are given below
                        \begin{equation*}
                        \delta(q_{\nu}|E)=
                   \begin{cases}
(2_{\{1\}},1_{\{2\}}, 1_{\{3\}}, 3_{\{4\}},\ldots,3_{\{q-1\}})&\textit{if}\quad  \nu=1 ;\\
                   (2_{\{1\}},3_{\{2\}}, \ldots, 3_{\{2\nu-1\}},1_{\{2\nu\}},1_{\{2\nu+1\}},3_{\{2\nu+2\}},\ldots,3_{\{q-1\}})&\textit{if}\quad  2 \leq \nu  \leq p-3 ;\\
                     (2_{\{1\}},3_{\{2\}}, \ldots, 3_{\{2\nu-1\}},1_{\{2\nu\}},\ldots,1_{\{q-1\}})&\textit{if}\quad  \nu= p-2 ;\\
                     (2_{\{1\}},3_{\{2\}}, \ldots, 3_{\{q-1\}})&\textit{if}\quad  \nu= p-1.\\
                     \end{cases}
                     \end{equation*}
                     
\noindent Based on the listed metric coordinates, all vertices of $BS(\Gamma(R))$ have unique metric coordinates with respect to $E$. It follows that the $dim(BS(\Gamma(R)))\leq q-1$.
We need to show that $dim(BS(\Gamma(R)))\geq q-1$. From the Theorem \ref{b} it is evident that dim($BS(\Gamma(R)) > q-2$ ($\because q=2p-3$). This indicates that dim($BS(\Gamma(R)) = q-1$. 
\end{proof}
\begin{example}
For $p=7$ $\&$ $q=11$ \\
\noindent Let us consider a set $E=\{ a_1, c^1_2,c^1_3,c^2_4,c^2_5, c^3_6, c^3_7, b^1_8,b^1_9, b^2_{10}\}$,  
then, to prove that $E$  is a resolving set of $BS(\Gamma(\mathbb{Z}_{77}))$, we need to assign unique metric coordinates for every vertex of $BS(\Gamma(\mathbb{Z}_{77}))-E$ with respect to set $E$. These sets are illustrated in Figure \ref{fig:11}. 
\begin{figure}[ht!]
    \centering
    \includegraphics[width=11cm]{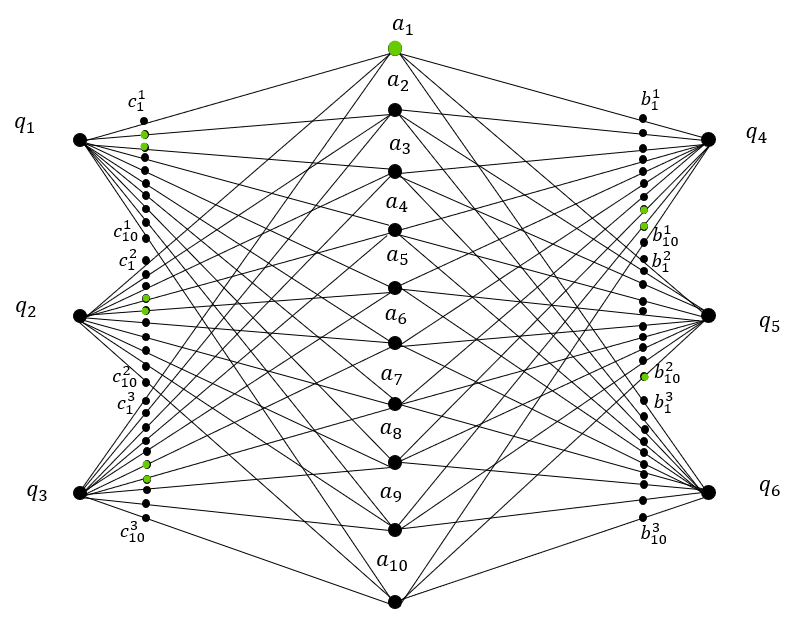}
    \caption{Barycentric Subdivision of Zero Divisor Graph of $\mathbb{Z}_{77}$}
    \label{fig:11}
\end{figure}

Metric coordinates of each vertex of $BS(\Gamma(\mathbb{Z}_{77}))-E$ with respect to set $E$ are given below
\renewcommand{\arraystretch}{1.3} 
\newpage
\begin{table}[h!]
    \begin{tabular}{|p{0.4\linewidth}|p{0.4\linewidth}|p{0.4\linewidth}|}
      \hline
        $\delta(b^1_{1}|E)=(1,4,4,4,4,4,4,2,2,4)$ &  $\delta(b^2 _1|E)=(1,4,4,4,4,4,4,4,4,2)$  \\ 
       
        $\delta(b^1_{2}|E)= (3,2,4,4,4,4,4,2,2,4)$ &  $\delta(b^2 _2|E)=(3,2,4,4,4,4,4,4,4,2)$     \\ 
       
        $\delta(b^1_{3}|E)= (3,4,2,4,4,4,4,2,2,4)$ &  $\delta(b^2 _3|E)=(3,4,2,4,4,4,4,4,4,2)$ \\
     
        $\delta(b^1_{4}|E)= (3,4,4,2,4,4,4,2,2,4)$ &  $\delta(b^2 _4|E)=(3,4,4,2,4,4,4,4,4,2)$   \\
       
        $\delta(b^1_{5}|E)= (3,4,4,4,2,4,4,2,2,4)$ & $\delta(b^2 _5|E)=(3,4,4,4,2,4,4,4,4,2)$    \\
    
      $\delta(b^1_{6}|E)= (3,4,4,4,4,2,4,2,2,4)$ &  $\delta(b^2 _6|E)=(3,4,4,4,4,2,4,4,4,2)$   \\

      $\delta(b^1_{7}|E)= (3,4,4,4,4,4,2,2,2,4)$ &  $\delta(b^2 _7|E)=(3,4,4,4,4,4,2,4,4,2)$    \\

      $\delta(b^1_{8}|E)= (3,4,4,4,4,4,4,0,2,4)$&   $\delta(b^2 _8|E)=(3,4,4,4,4,4,4,2,4,2)$     \\

      $\delta(b^1_{9}|E)= (3,4,4,4,4,4,4,2,0,4)$ &  $\delta(b^2 _9|E)=(3,4,4,4,4,4,4,4,2,2)$ \\

      $\delta(b^1_{10}|E)= $$(3,4,4,4,4,4,4,2,2,2)$&   $\delta(b^2 _{10}|E)=(3,4,4,4,4,4,4,4,4,0)$      \\
      \hline
      
    \end{tabular}
    
    \end{table}
    \begin{table}[h!]
    \begin{tabular}
    {|p{0.4\linewidth}|p{0.4\linewidth}|p{0.4\linewidth}|}
    \hline
      $\delta(b^3 _1|E)=(1,4,4,4,4,4,4,4,4,4)$&         $\delta(a^1 _1|E)=(0,3,3,3,3,3,3,3,3,3)$\\
      $\delta(b^3 _2|E)=(3,2,4,4,4,4,4,4,4,4)$&$\delta(a^1 _2|E)= (4,1,3,3,3,3,3,3,3,3) $\\
      
        $\delta(b^3 _3|E)=(3,4,2,4,4,4,4,4,4,4)$&$\delta(a^1 _3|E)= (4,3,1,3,3,3,3,3,3,3)$\\
        $\delta(b^3 _4|E)=(3,4,4,2,4,4,4,4,4,4)$&$\delta(a^1 _4|E)= (4,3,3,1,3,3,3,3,3,3)$\\
        $\delta(b^3 _5|E)=(3,4,4,4,2,4,4,4,4,4)$&$\delta(a^1 _5|E)= (4,3,3,3,1,3,3,3,3,3)$\\
         $\delta(b^3 _6|E)=(3,4,4,4,4,2,4,4,4,4)$& $\delta(a^1 _6|E)= (4,3,3,3,3,1,3,3,3,3)$ \\
          $\delta(b^3 _{7}|E)=(3,4,4,4,4,4,2,4,4,4)$&$\delta(a^1 _7|E)= (4,3,3,3,3,3,1,3,3,3)$ \\
          $\delta(b^3_{8}|E)=(3,4,4,4,4,4,4,2,4,4)$ &$\delta(a^1 _8|E)= (4,3,3,3,3,3,3,1,3,3)$\\
           $\delta(b^3_{9}|E)=(3,4,4,4,4,4,4,4,2,4)$& $\delta(a^1 _9|E)= (4,3,3,3,3,3,3,3,1,3)$\\
           $\delta(b^3_{10}|E)=(3,4,4,4,4,4,4,4,4,2)$& $\delta(a^1 _{10}|E)= (4,3,3,3,3,3,3,3,3,1)$\\
           \hline
    \end{tabular}
    
    \end{table}
 \begin{table}[h!]
\begin{tabular}{|p{0.4\linewidth}|p{0.4\linewidth}|p{0.4\linewidth}|}
      \hline
        $\delta(c^1 _1|E)=(1,2,2,4,4,4,4,4,4,4)$ &  $\delta(c^2 _1|E)=(1,4,4,2,2,4,4,4,4,4)$    \\ 
       $\delta(c^1 _2|E)= (3,0,2,4,4,4,4,4,4,4)$ &  $\delta(c^2 _2|E)=(3,2,4,2,2,4,4,4,4,4)$    \\ 
       $\delta(c^1 _3|E)= (3,2,0,4,4,4,4,4,4,4)$ &  $\delta(c^2 _3|E)=(3,4,2,2,2,4,4,4,4,4)$   \\
     $\delta(c^1 _4|E)= (3,2,2,2,4,4,4,4,4,4)$ &  $\delta(c^2 _4|E)=(3,4,4,0,2,4,4,4,4,4)$   \\
       $\delta(c^1 _5|E)= (3,2,2,4,2,4,4,4,4,4)$ & $\delta(c^2 _5|E)=(3,4,4,2,0,4,4,4,4,4)$    \\
    $\delta(c^1 _6|E)= (3,2,2,4,4,2,4,4,4,4)$ &  $\delta(c^2 _6|E)=(3,4,4,2,2,2,4,4,4,4)$   \\
$\delta(c^1 _7|E)= (3,2,2,4,4,4,2,4,4,4)$ &  $\delta(c^2 _7|E)=(3,4,4,2,2,4,2,4,4,4)$    \\
$\delta(c^1 _8|E)= (3,2,2,4,4,4,4,2,4,4)$ &  $\delta(c^2 _8|E)=(3,4,4,2,2,4,4,2,4,4)$     \\
$\delta(c^1 _9|E)= (3,2,2,4,4,4,4,4,2,4)$ &  $\delta(c^2 _9|E)=(3,4,4,2,2,4,4,4,2,4)$  \\
$\delta(c^1 _{10}|E)= $$(3,2,2,4,4,4,4,4,4,2)$ &  $\delta(c^2 _{10}|E)=(3,4,4,2,2,4,4,4,4,2)$     \\
      \hline
    \end{tabular}
\end{table}
\newpage
\begin{table}[h!]
    \begin{tabular}{|p{0.4\linewidth}|p{0.4\linewidth}|}
      \hline
 $\delta(c^3 _1|E)=(1,4,4,4,4,2,2,4,4,4)$& $ \delta(q_1|E)=(2,1,1,3,3,3,3,3,3,3)$\\
 $\delta(c^3 _2|E)=(3,2,4,4,4,2,2,4,4,4)$  & $ \delta(q_2|E)=(2,3,3,1,1,3,3,3,3,3)$\\
  $\delta(c^3 _3|E)=(3,4,2,4,4,2,2,4,4,4)$ & $ \delta(q_3|E)=(2,3,3,3,3,1,1,3,3,3)$\\
    $\delta(c^3 _4|E)=(3,4,4,2,4,2,2,4,4,4)$ &$ \delta(q_4|E)=(2,3,3,3,3,3,3,1,1,3)$\\
     $\delta(c^3 _5|E)=(3,4,4,4,2,2,2,4,4,4)$ &$ \delta(q_5|E)=(2,3,3,3,3,3,3,3,3,1)$\\
    $\delta(c^3 _6|E)=(3,4,4,4,4,0,2,4,4,4)$  & $ \delta(q_6|E)=(2,3,3,3,3,3,3,3,3,3)$\\
     $\delta(c^3_7|E)=(3,4,4,4,4,2,0,4,4,4)$  & \\
      $\delta(c^3_8|E)=(3,4,4,4,4,2,2,2,4,4)$&  \\
      $\delta(c^3_9|E)=(3,4,4,4,4,2,2,4,2,4)$&  \\
     $\delta(c^3_{10}|E)=(3,4,4,4,4,2,2,4,4,2)$ &  \\
      \hline
    \end{tabular}
    \label{tab:example3}
\end{table}
\end{example}
\noindent Based on the observation above, all vertices of $BS(\Gamma(\mathbb{Z}_{77}))$ have unique metric coordinates with respect to $E$. So $dim(BS(\Gamma(\mathbb{Z}_{77})))$ is less than or equal to 10. From Theorem \ref{b}, it is evident that $dim(BS(\Gamma(\mathbb{Z}_{77})))\geq 10$ (since $q=2p-3$). This indicates that $dim(BS(\Gamma(\mathbb{Z}_{77})))=10$.\\
 \end{theorem}
 \begin{theorem}
Let $p$ and $q$ be two distinct odd primes, where $q>p$ $\&$ $q = 2p-3$. Suppose $R=\mathbb{Z}_n$, where $n=pq$, then
$idim(BS(\Gamma(R))
)=q-1$.
\begin{proof}
Theorem $\ref{z}$ clearly states that $dim(BS(\Gamma(R))=q-1$, and the metric basis considered in Theorem $\ref{z}$ is an independent set. This concludes the theorem by Definition $\ref{defn1}$ and Definition \ref{defn2}.
\end{proof}
\end{theorem}
\section {Conclusion}
 \noindent This study aimed to calculate the MD of the barycentric subdivision of the zero divisor graph $\mathbb {Z}_{pq}$, where $p$ and $q$ are distinct primes with $q$ greater than $p$. MD of the barycentric subdivision is an important graph transformation with various applications in network design, computational geometry, and topological data analysis. In this article, we proved that the MD of the barycentric subdivision of $\Gamma(\mathbb{Z}_{pq})$ is greater than or equal to $q-2$ (where $p\geq 2$, and $q\geq 2p-1$). And also investigating the MD of the barycentric subdivision of $\Gamma(\mathbb{Z}_{pq})$, where $p< q \leq 2p-5$ will be an exciting topic for future research.\\

 \section*{Data Availability}
Data sharing does not apply to this manuscript because no data sets were analyzed or generated during this particular study.
\section*{Conflicts of Interest}
Authors have nothing to declare as a conflict of interest.
\section*{Authors’ Contributions}
All authors contributed equally.

  	\bibliographystyle{tfcad}
\bibliography{mybib}
 \end{document}